\renewcommand{\epsilon}{\varepsilon}
\renewcommand{\phi}{\varphi}
\newcommand{\cov}{\operatorname{ cov}}
\newcommand{\su}{\subseteq}
\newcommand{\rest}{\restriction}
\renewcommand{\a}{\alpha}
\renewcommand{\b}{\beta}
\renewcommand{\d}{\delta}
\renewcommand{\l}{\lambda}
\renewcommand{\k}{\kappa}
\newcommand{\z}{\zeta}
\newcommand{\om}{\omega}
\newcommand{\lng}{\langle}
\newcommand{\rng}{\rangle}
\newcommand{\ov}{\overline}
\newcommand{\sm}{\setminus}
\newcommand{\dom}{{\operatorname {dom}\,}}
\newcommand{\cf}{{\operatorname {cf}}}
\newcommand{\otp}{{\operatorname {otp}}}
\newcommand{\ran}{{\operatorname  {ran}}}
\newcommand{\tcf}{\operatorname {tcf}}
\newcommand{\imply}{\Rightarrow}
\newtheorem{theorem}{Theorem}[section]
\newtheorem{corollary}[theorem]{Corollary}
\newtheorem{definition}[theorem]{Definition}
\newtheorem{claim}[theorem]{Claim}
\newtheorem*{abstheorem}{Theorem}
\author{Menachem Kojman}
\address{Department of Mathematics\\
Ben-Gurion University of the Negev\\
P.O.B. 653 \\
Be'er Sheva\\
84105 Israel}
\thanks{Research on this paper was partially supported by an Israeli Science Foundation grant number 1365/14}
\email{kojman@math.bgu.ac.il}
\subjclass[2000]{Primary: 03E10}
\keywords{ Cardinal Arithmetic, Density, Silver's theorem, Singular Cardinals Hypotheisis, Generalized Continuum Hypothesis}
\begin{document}
\title{On the arithmetic of  Density}
\maketitle

\begin{abstract}The $\k$-density of a cardinal $\mu\ge\k$ is the least cardinality of a dense collection of $\k$-subsets of $\mu$ and is denoted by $\mathcal D(\mu,\k)$. The \emph{Singular Density Hypothesis} (\textsf{SDH}) for a singular cardinal $\mu$ of cofinality $\cf\mu=\k$ is  the equation $\mathcal D(\mu,\k)=\mu^+$. The \emph{Generalized Density Hypothesis} (\textsf{GDH}) for $\mu$ and $\l$ such that  $\l\le\mu$ is:
\[
\mathcal D(\mu,\l)=
\begin{cases} 
 \mu & \text{ if } \cf\mu\not=\cf\l \cr
 \mu^+ & \text{ if } \cf\mu=\cf\l.
 \end{cases}
 \]

Density  is shown to satisfy Silver's theorem. The most important case is:
\begin{abstheorem}[Theorem \ref{densesilver}]
If $\k=\cf\k<\theta=\cf\mu<\mu$ and the set of cardinals $\l<\mu$ of cofinality $\k$ that satisfy the \textsf{SDH} is stationary in $\mu$ then the \textsf{SDH} holds at $\mu$. 
\end{abstheorem}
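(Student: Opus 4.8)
The plan is to follow Silver's singular–cardinal reflection argument, in the form due to Galvin and Hajnal. By the ZFC lower bound for the density function established earlier, $\mathcal D(\mu,\theta)\ge\mu^+$ already holds when $\theta=\cf\mu$, so the whole content is the upper bound $\mathcal D(\mu,\theta)\le\mu^+$, i.e.\ the construction of a dense family of $\theta$-subsets of $\mu$ of size at most $\mu^+$.

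Fix an increasing continuous sequence $\langle\mu_i:i<\theta\rangle$ of cardinals cofinal in $\mu$ with $\mu_0>\theta$, chosen (by letting the successor stages be successor cardinals) so that $\cf\mu_i\ne\theta$ for every $i$. Put $S=\{\lambda<\mu:\cf\lambda=\kappa\text{ and }\mathcal D(\lambda,\kappa)=\lambda^+\}$ and $S^\ast=\{i<\theta:\mu_i\in S\}$; since $S$ is stationary in $\mu$, $S^\ast$ is stationary in $\theta$. The first thing I would do is upgrade the reflection hypothesis at the $\mu_i$ from $\kappa$ to $\theta$: for $i\in S^\ast$ one has $\cf\mu_i=\kappa\le\theta$ and $\mathcal D(\mu_i,\kappa)=\mu_i^+$, and a pcf‑style localization lemma (which should be available among the earlier material) promotes this to $\mathcal D(\mu_i,\theta)=\mu_i^+$, the point being that the additional $\theta$-sized subsets of $\mu_i$ are controlled by the bounded part of $\mu_i$, which contributes at most $\mu_i<\mu_i^+$ sets. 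For $i\notin S^\ast$ the ZFC evaluation of $\mathcal D(\mu_i,\theta)$, valid because $\cf\mu_i\ne\theta$, together with monotonicity of $\mathcal D(\cdot,\kappa)$ and unboundedness of $S$, keeps $\mathcal D(\mu_i,\theta)\le\mu$.

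Next comes the \emph{localization} inequality: any $\theta$-subset $A$ of $\mu$ is reconstructed from its traces $A\cap\mu_i$, and choosing for each $i$ a member of a $\theta$-dense family at $\mu_i$ covering $A\cap\mu_i$ and gluing these along $\theta$ exhibits a dense family for $\mu$; this bounds $\mathcal D(\mu,\theta)$ by $\mu$ (the bounded traces, absorbed by the earlier analysis) plus the cofinality of the product $\prod_{i<\theta}\mathcal D(\mu_i,\theta)$ computed modulo the nonstationary ideal $I$ on $\theta$. Since on the stationary set $S^\ast$ the factor is the successor cardinal $\mu_i^+$ and elsewhere it is $\le\mu$, it remains to see that $\cf\!\bigl(\prod_{i<\theta}\mathcal D(\mu_i,\theta),\,<_I\bigr)\le\mu^+$, and this last step — the Galvin–Hajnal ordinal‑rank computation — is where I expect the real work to lie. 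One checks that $<_I$ is a well‑founded partial order (this needs $I$ to be $\theta$-complete, which is precisely why the hypothesis demands a \emph{stationary}, not merely unbounded, reflecting set), assigns to each $f$ in the product its rank $\|f\|_I$, and proves by induction on $\|f\|_I$ that the portion of the product $<_I$-below $f$ is cofinally generated by at most $\mu^{+(1+\|f\|_I)}$ elements (successor ranks multiplicatively, limit ranks additively); restricted to $S^\ast$, where the factors are successors converging to $\mu$, the Galvin–Hajnal bookkeeping collapses the exponent and yields the bound $\mu^+$. Composing with the localization inequality gives $\mathcal D(\mu,\theta)\le\mu^+$.

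The two points most likely to require care are the exact form of the pcf localization lemmas — both the $\kappa$-to-$\theta$ upgrade at the $\mu_i$ and the reconstruction of a $\theta$-subset of $\mu$ from its traces, where the discrepancy $\kappa<\theta$ must be negotiated, since the reflected data at $\mu_i$ concerns $\kappa$-subsets while the objects being approximated are $\theta$-subsets — and the rank computation itself, together with the verification that the ZFC lower bound $\mathcal D(\mu,\theta)\ge\mu^+$ and the bounded‑part estimate are available in the precise shapes used.
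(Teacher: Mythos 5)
There is a genuine gap, and it sits exactly at the step you yourself flag as ``where the discrepancy $\kappa<\theta$ must be negotiated.'' Your central ``localization inequality'' --- bounding $\mathcal D(\mu,\theta)$ by the bounded part plus the cofinality of $\prod_{i<\theta}\mathcal D(\mu_i,\theta)$ modulo the nonstationary ideal --- is not justified, and it is precisely the point where density parts company with exponentiation and with covering numbers. If you assign to each $A\in[\mu]^\theta$ a function $f_A$ recording which member of a dense family at $\mu_i$ sits inside $A\cap\mu_i$, then dominating $f_A$ by some $g_\alpha$ from a $\mu^+$-sized cofinal family tells you only that $f_A(i)<g_\alpha(i)$; it does not hand you a single set of a $\mu^+$-sized dense family contained in $A$, because a priori more than $\mu^+$ many distinct $A$'s lie below the same $g_\alpha$ and the map $A\mapsto f_A$ is badly non-injective. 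The paper's introduction explicitly identifies this as the obstruction to transplanting the Baumgartner--Prikry argument, and the actual proof circumvents it by first proving (Claim \ref{sum} and Corollary \ref{anti}) that $\overline{\mathcal D(\mu,\theta)}$ equals the cardinality of any maximal antichain in $\langle\overline{[\mu]^\theta},\subseteq\rangle$, and then bounding \emph{antichains}: for an almost disjoint family the coding $X\mapsto\langle j(X),Z(X)\rangle$ produced after Fodor's lemma \emph{is} injective, since two distinct almost disjoint sets cannot both contain the same unbounded-in-$\theta$ collection of unbounded $\kappa$-pieces. This reduction to antichains is the missing idea in your proposal, and without it (or some substitute) the counting does not close.

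A second, related problem is your treatment of the traces. The sets that actually need to be handled are the members of $\overline{[\mu]^\theta}$, i.e.\ $A$ of order type $\theta$ cofinal in $\mu$, and for such $A$ every trace $A\cap\mu_i$ has cardinality $<\theta$; so $\theta$-dense families at the $\mu_i$, and hence the proposed ``$\kappa$-to-$\theta$ upgrade'' $\mathcal D(\mu_i,\kappa)=\mu_i^+\Rightarrow\mathcal D(\mu_i,\theta)=\mu_i^+$ (itself unsupported --- recall the paper's warning that $\mathcal D$ is not monotone in the second variable), are not the relevant data. What the hypothesis is actually used for is to extract, for stationarily many $i$ with $\cf\kappa_i=\kappa$, a member of a $\kappa_i^+$-sized dense subset of $\overline{[\kappa_i]^\kappa}$ lying inside $A\cap\kappa_i$ and cofinal in $\kappa_i$; the resulting indices form a function in $\prod_{i\in S}\kappa_i^+$, which is then dominated using the pcf fact $\tcf\langle\prod_{i\in S}\kappa_i^+,<_{NS\rest S}\rangle=\mu^+$ rather than a Galvin--Hajnal rank induction. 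Your closing rank computation could plausibly be made to work in place of the scale (the paper notes the scale is a convenience, not a necessity), but it cannot repair the two earlier gaps.
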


A more general version is given in Theorem \ref{general}.

A corollary of Theorem \ref{densesilver} is: 

\begin{abstheorem}[Theorem \ref{gdh}] If the Singular Density Hypothesis  holds for  all sufficiently large singular cardinals of some fixed  cofinality $\k$, then for all cardinals $\l$ with  $\cf\l \ge k$, for all sufficiently large   $\mu$, the  \textsf{GDH}   holds.
\end{abstheorem}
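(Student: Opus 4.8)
The plan is to derive Theorem~\ref{gdh} from Theorem~\ref{densesilver} in two movements: first to promote the hypothesis so that the \textsf{SDH} holds at \emph{all} sufficiently large singular cardinals of cofinality $\ge\k$, and then to evaluate $\mathcal D(\mu,\l)$ by induction on $\mu$, splitting according to whether $\cf\mu=\cf\l$. For the first movement, let $\mu_0$ be such that every singular cardinal $\ge\mu_0$ of cofinality $\k$ satisfies the \textsf{SDH}, and fix a singular $\mu>\mu_0$ with $\th:=\cf\mu>\k$, say $\mu=\al_\d$. The map $\a\mapsto\al_\a$ is continuous, increasing and cofinal from $\d$ to $\mu$, and $\cf\d=\th>\k$, so $\{\a<\d:\cf\a=\k\}$ is stationary in $\d$, whence $\{\n<\mu:\n\text{ a cardinal},\ \cf\n=\k\}$ is stationary in $\mu$; after discarding the bounded part below $\mu_0$ it still carries a stationary set of cardinals satisfying the \textsf{SDH}, so Theorem~\ref{densesilver} yields $\mathcal D(\mu,\th)=\mu^+$. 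Hence the \textsf{SDH} holds at every sufficiently large singular cardinal of cofinality $\ge\k$.

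For the case $\cf\mu\ne\cf\l$ I would argue $\mathcal D(\mu,\l)=\mu$ (and, as $\mu$ may be taken $>\l$, necessarily $\l<\mu$). Fix an increasing continuous $\langle\mu_i:i<\cf\mu\rangle$ cofinal in $\mu$, put $\mu_{<i}=\sup_{j<i}\mu_j$, and for $A\in[\mu]^\l$ set $a_i=|A\cap[\mu_{<i},\mu_i)|$, so $\sum_{i<\cf\mu}a_i=\l$. If $\cf\mu>\l$ every $A$ is bounded. Otherwise $\cf\mu<\l$ (since $\cf\mu=\l$ would force $\cf\l=\cf\mu$), and a brief cofinality count shows that either one block already has size $\l$, or (as is forced when $\cf\l<\cf\mu$) the block sizes reach $\l$ along a set $I_0\su\cf\mu$ with $|I_0|=\cf\l<\cf\mu$, in which case $\sup I_0<\cf\mu$ and $\bigcup_{i\in I_0}(A\cap[\mu_{<i},\mu_i))$ has size $\l$ and is bounded in $\mu$. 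In every case $A$ contains a subset of size $\l$ below some cardinal $\chi<\mu$, so a dense family for $[\mu]^\l$ can be glued from dense families for the $[\chi]^\l$: $\mathcal D(\mu,\l)\le\sum_{\chi<\mu}\mathcal D(\chi,\l)$. By induction the large $\chi$ contribute at most $\chi^+\le\mu$ while the rest contribute a fixed cardinal, absorbed once $\mu$ is large; with the elementary bound $\mathcal D(\mu,\l)\ge\mu$ (for $\l<\mu$) this gives $\mathcal D(\mu,\l)=\mu$.

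For the case $\cf\mu=\cf\l=:\rho\ge\k$, for $\mu$ large the \textsf{SDH} holds at $\mu$, and the lower bound $\mathcal D(\mu,\l)\ge\mu^+$ is the standard one valid whenever $\cf\mu=\cf\l$; the upper bound $\mathcal D(\mu,\l)\le\mu^+$ is where the real work lies. A naive decomposition of $A\in[\mu]^\l$ into $\rho$ blocks, each captured inside a $\mu_i$, costs $\prod_{i<\rho}\mu_i^+=\mu^{\cf\mu}$ choices, which overshoots $\mu^+$. The plan is to fix instead a pcf scale $\langle f_\a:\a<\mu^+\rangle$ of length $\mu^+$ in a product $\prod_{i<\rho}\mu_i$ cofinal in $\mu$ modulo the bounded ideal, and to use it to replace the $\rho$ independent block-captures by a single scale ordinal $<\mu^+$ --- recording the place of a ``silhouette'' of $A$ in the scale --- together with a bounded amount of residual data, so that the dense family has size $\mu^+\cdot\mu=\mu^+$. \textbf{The hard part} is exactly this step: the scale exists because the \textsf{SDH} at $\mu$ forces $\pp(\mu)=\mu^+$, and establishing \emph{that} implication --- via the comparison $\mathcal D(\mu,\cf\mu)\ge\pp(\mu)$ between density and $\pp$, which should belong to the basic theory of density developed before Theorem~\ref{densesilver} (and, for uncountable $\rho$, may need the refinements of Theorem~\ref{general}) --- together with carrying out the scale-indexed gluing and its book-keeping, is the only genuinely non-routine ingredient. (If moreover $2^{<\mu}\le\mu^+$ the scale can be skipped: then $\mu^{\cf\mu}=\mu^+$ already and the naive gluing suffices.)

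Finally, ``sufficiently large'' in the conclusion is the maximum of $\mu_0$ with a fixed cardinal closing off the small‑cardinal densities $\mathcal D(\chi,\l)$ and the exponentials $2^{\cf\l}$, $2^\rho$; below it the induction has nothing to prove, and above it the two computations give the \textsf{GDH} value of $\mathcal D(\mu,\l)$. In short: the whole argument is a transfinite recursion on $\mu$ fed by the \textsf{SDH}, and the single point I expect to require care is extracting $\pp(\mu)=\mu^+$ (equivalently, a scale of length $\mu^+$) from the \textsf{SDH} and running the scale-organized gluing.
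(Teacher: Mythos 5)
There is a genuine gap, and it sits exactly where you flagged ``the hard part'': the upper bound $\mathcal D(\mu,\l)\le\mu^+$ when $\l$ is singular and $\cf\l=\cf\mu=\theta\ge\k$. You do not carry this step out, and the route you sketch --- extract $\pp(\mu)=\mu^+$ from the \textsf{SDH}, fix a scale in $\prod_{i<\theta}\mu_i$ modulo the bounded ideal, and record a $\l$-set by a single scale ordinal plus ``bounded residual data'' --- is not obviously workable: a scale only controls the characteristic function $i\mapsto\sup(A\cap\mu_i)$ (one ordinal per block), whereas a set $A\in[\mu]^\l$ has $\l_i\gg\theta$ elements inside each block, and it is precisely the \emph{contents} of the blocks, not their suprema, that must be captured by $\mu^+$ many objects. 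No appeal to $\pp$ or to scales is needed here, and the paper makes none. Its argument is an induction on $\l$: for each regular $\l_i<\l$ with $\theta<\l_0$, the induction hypothesis gives $\mathcal D(\mu,\l_i)=\mu$ (since $\cf\mu=\theta\ne\l_i$), so one fixes dense families $\mathcal D_i\su[\mu]^{\l_i}$ of size $\mu$ and an injection $f:\bigcup_i\mathcal D_i\to\mu$; the induction hypothesis applied to $\theta<\l$ gives $\mathcal D(\mu,\theta)=\mu^+$, so one fixes a dense $\mathcal D_\theta\su[\mu]^\theta$ of size $\mu^+$. Given $Y\in[\mu]^\l$, choose $Y_i\in[Y]^{\l_i}\cap\mathcal D_i$, let $Z=\{f(Y_i):i<\theta\}\in[\mu]^\theta$, capture $Z$ by some $X\in\mathcal D_\theta$, and observe that $\bigcup_{\a\in X}f^{-1}(\a)$ is a subset of $Y$ of size $\l$. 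The resulting dense family has size at most $|\mathcal D_\theta|=\mu^+$. This coding of $\l$-sets by $\theta$-sets of labels is the idea your proposal is missing.

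Two smaller points. First, your ``first movement'' applies Theorem~\ref{densesilver} to every large singular $\mu$ of cofinality $\theta>\k$ without verifying its hypothesis $\underline{\mathcal D(\mu,\theta)}\le\mu^+$; that hypothesis is not free, and in the paper it is supplied by the induction on $\mu$ (the case $\cf\mu\ne\cf\l$ applied to all $\a<\mu$ gives $\mathcal D(\a,\theta)\le|\a|^+\le\mu$, hence $\underline{\mathcal D(\mu,\theta)}\le\mu$), so the stationarity argument cannot be decoupled from the recursion as a standalone preliminary step. Second, your treatment of the case $\cf\mu\ne\cf\l$ agrees in substance with the paper's (every $\l$-set has a bounded $\l$-subset; glue dense families of the $[\a]^\l$ for $\a<\mu$) and is fine. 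The overall architecture --- recursion on $\l$ and on $\mu$, with Theorem~\ref{densesilver} invoked at the critical regular-$\l$ case $\cf\mu=\l$ --- matches the paper; what is absent is the entire argument for the singular-$\l$ critical case, which is the main content of the proof.
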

\end{abstract}

\section{Introduction}
 \emph{Eventual regularity} is a recurring theme in cardinal arithmetic since the discovery of pcf theory. Arithmetic rules that do not necessarily hold for all cardinals, can  sometimes be seen to  hold in appropriate end-segments of the cardinals.  

The most famous precursor of modern cardinal arithmetic is \emph{Silver's theorem} \cite{silver}, which says that if one of the arithmetic equations  (1) the \emph{Singular Cardinal Hypothesis} (\textsf{SCH}); or (2) the  \emph{Generalized Continuum Hypothesis} (\textsf{GCH}), holds sufficiently often below a singular of uncountable cofinality, then it   holds  at the singular itself.  

Silver's theorem came as a surprise in 1973,  shortly after  Solovay and  Easton employed Forcing, that was discovered by Cohen in 1963, to prove that  no non-trivial bound  on the power of a regular cardinal could be deduced from  information about the  powers of smaller cardinals. At the time,   all set theorists believed that no such implications existed and that further development of Forcing would clear  the missing singular case soon (see \cite{singular} for the history of the subject and for a survey of other precursors of pcf theory, e.g. in topology).


\medskip

The present note concerns the eventual regularity of  the  cardinal arithmetical function \emph{density}.
The density function $\mathcal D(\mu,\k)$ is defined for cardinals $\k\le \mu$ as the least cardinality of a collection $\mathcal D\su [\mu]^\k$ which is \emph{dense} in $\lng [\mu]^\k, \su\rng$. 

A detailed definition and basic properties of density appear  in  Section 2 below.  
Let us point out  now, though, 
 one crucial  difference between $\mathcal D(\mu,\k)$ and the exponentiation $\mu^\k$: the function $\mathcal D(\mu,\k)$ is \emph{not} monotone increasing in the second variable. For example,  if $\mu$ is a strong limit cardinal of cofinality $\omega$ then $\mathcal D(\mu,\aleph_0)=\mu^+ > \mathcal D(\mu,\aleph_1)=\mu$. 

Recently, asymptotic results in infinite graph theory and in the combinatorics of families of sets \cite{graphs, splitting} --- some of which were proved earlier with the \textsf{GCH} or with forms of the \textsf{SCH} \cite{eh-graphs,eh-families,hjss,kom-close,kom-comp} ---  were proved in \textsf{ZFC} by making use of  an eventual regularity property of density:  that  density satisfies a version of Shelah's  \textsf{RGCH} theorem. 
 See also \cite{sh:1052} on the question whether the use of  \textsf{RGCH}  in \cite{graphs} is necessary. 

\subsection{The results}
Three theorem about the eventual behaviour of density are proved below. Theorems \ref{densesilver} is a density versions of the most popular case of Silver's theorem and Theorem \ref{general} is a density version of the general  Silver theorem.  They deal with the way the behaviour of density at singular cardinals of cofinality $\k$ below a singular $\mu$ of cofinality $\theta>\kappa$ bounds the $\theta$-density at $\mu$. 

The proofs of \ref{densesilver} and of \ref{general} follow  in their  outline  two elementary proofs 
by Baumgartner and Prikry:   \cite{BaumPrik}, for the central case, and \cite{BaumPrikDisc}, for the general theorem.  
The following   modifications were required. First, one has to use almost disjoint families of sets instead of general families. The reason is that in the pressing down argument with the density function is not injective in general, but is so with the additional condition of almost disjointness.  Second, a use of a pcf scale in the proof of Theorem \ref{densesilver}  replaces an indirect argument in \cite{BaumPrik}. This is not strictly necessary, but makes the proof clearer. Finally, the density of stationary subsets with inclusion replaces the stronger hypothesis about cardinal arithmetic in the general case. 

An elementary proof of Silver's theorem   was  discovered in 1973  also  by Jensen, independently of \cite{BaumPrikDisc}, but was only circulated and not published   (see the introduction to \cite{BaumPrikDisc} and  \cite{singular}).

\medskip
Theorem  \ref{gdh} states that if the  \textsf{SDH} holds eventually at some fixed cofinality $\k$ then    the \textsf{GDH} holds for all sufficiently large cardinals $\mu$ and $\l\le \mu$ such that  $\cf\l\ge\k$. 
The proof is by induction, and employs Theorem \ref{densesilver}   in  the critical cases.

 \subsection{Notation and prerequisites}
The notation used here is standard in set theory. In particular, the word \emph{cardinal}, if not explicitly stated otherwise, is to be understood as ``infinite cardinal". The variables $\k,\theta,\mu,\l$ stand for infinite cardinals and $\a,\b,\gamma,\d,i,j$ denote ordinals. By $\cf\mu$ the \emph{cofinality} of $\mu$ is denoted. For $\k<\mu$ the symbol $[\mu]^\k$ denotes the set of all subsets of $\mu$ whose cardinality is $\k$. 

We assume familiarity with the basics of stationary sets and the non-stationary ideal and acquaintance with Fodor's pressing down theorem. This material is available in every standard set theory textbook.

\subsection{Potential use in topology}
We conclude the introduction with the following  illustration of  the potential applicability of density to topology. 

Suppose $G=\lng V, E\rng$ is an arbitrarily large graph (one can assume that it is a proper class with no harm) and that $G$ does not conatain large bipartite graphs, say, for some cardinal $\l$ there is no copy of the complete bipartite graph $K_{\l,\l}$ in $G$. 

For every cardinal $\mu$, let us define a topology on $V$ by letting $U\su V$ be open if for all $v\in U$ it holds that $|G[v]\sm U|<\mu$ ($G[v]$ is the \emph{set of neighbours} of $v$ in $G$). Equivalently, $D\su V$ is closed if  
every vertex $v\in V$ which is connected by edges to $\mu$ vertices from $D$ belongs to $D$.

What can be said about the cardinalities of closed sets in this topology? Using the arithmetic properties of the density function, 
it was proved in \cite{graphs} that  if $\mu\ge \beth_\om(\l)$, the closure of every set of size $\theta\ge \mu$ has 
size $\theta$.

\section{Definition and Basic properties of density}

\begin{definition}
\begin{enumerate} 
\item If  $\lng P,\le\rng$ is a partially ordered set and $A, B\su P$ then $A$ is \emph{dense in $B$} if 
$(\forall y\in B)(\exists x\in A)(x\le y)$. We say that $A\su P$ is \emph{dense} if $A$ is dense in $P$. 
\item If  $\lng P,\le\rng$ is a partially ordered set and $A,B\su P$ then $A$ is  an \emph{antichain with respect to  $B$}  
if for all distinct $x,y\in A$ there is no $z\in B$ such that $z\le x \wedge z\le y$. We say that $A\su B$ is an antichain if $A$ is an antichain with respect to $P$. 
\end{enumerate}
\end{definition}

\begin{definition} Suppose $\theta\le \mu$ are cardinals. 
\begin{enumerate}
\item The \emph{$\theta$-density of $\mu$}, denoted by  $\mathcal D(\mu,\theta)$, is  the least cardinality of a 
set  $\mathcal D\su [\l]^\k$ which is \emph{dense} in $\lng [\mu]^\theta,\su\rng$. 
\item Let $\overline{[\mu]^\theta}=\{X\in [\mu]^\theta: \forall \a(\a<\theta\imply |X\cap \l|<\theta)\}$.  
\item Let $\underline{ [\mu]^\k}=\bigcup\{[\a]^\theta: \a<\mu\}$ (the set of all  members of $[\mu]^\theta$ which are bounded in $\mu$).  
\item Let $\overline{\mathcal D(\l,\theta)}$ be the least cardinality of a set $\mathcal D\su \overline{[\mu]^\theta}$ 
which is dense in $\overline{[\mu]^\theta}$, and let us call it \emph{the upper $\theta$-density of $\mu$},   and let $\underline{ \mathcal D(\mu,\theta)}$ be the least cardinality 
of $\mathcal D\su \underline{ [\mu]^\theta}$ which is dese in $\underline{[\mu]\theta}$, and let us call it \emph{the lower $\theta$-density of $\mu$}. 
\end{enumerate}
\end{definition}

\begin{claim} Suppose $\theta\le\mu$. Then $\mathcal D(\mu,\theta) = \overline{\mathcal D(\mu,\theta)}+
 \underline{ \mathcal D(\mu,\theta)}$. 
\end{claim}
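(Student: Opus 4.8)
The plan is to prove the two inequalities $\overline{\mathcal D(\mu,\theta)}+\underline{\mathcal D(\mu,\theta)}\le \mathcal D(\mu,\theta)$ and $\mathcal D(\mu,\theta)\le\overline{\mathcal D(\mu,\theta)}+\underline{\mathcal D(\mu,\theta)}$ separately, each by an explicit manipulation of dense families; throughout I read the definition of the upper part as $\overline{[\mu]^\theta}=\{X\in[\mu]^\theta:(\forall\alpha<\mu)\ |X\cap\alpha|<\theta\}$. I will also use the elementary remark that $\mathcal D(\mu,\theta)$, and likewise each of the one-sided densities when its poset is nonempty, is an infinite cardinal: below any $Y$ of size $\theta$ one can split $Y$ into $\theta$ pairwise disjoint (hence pairwise $\subseteq$-incomparable) subsets of size $\theta$, each again of the relevant kind, and a dense family must contain something below each of them. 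Consequently the ``$+$'' in the statement is just a maximum, which removes any arithmetic worry, and the degenerate cases ($\overline{[\mu]^\theta}=\emptyset$ when $\cf\mu>\theta$; $\underline{[\mu]^\theta}=\emptyset$ precisely when $\theta=\mu$) cause no trouble, since then the corresponding one-sided density is $0$ and the matching case below simply never arises.

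For the inequality $\overline{\mathcal D(\mu,\theta)}+\underline{\mathcal D(\mu,\theta)}\le\mathcal D(\mu,\theta)$, the point is that both $\overline{[\mu]^\theta}$ and $\underline{[\mu]^\theta}$ are closed under passing to $\subseteq$-subsets of size $\theta$: if $Z\subseteq Y$ with $|Z|=|Y|=\theta$, then $|Z\cap\alpha|\le|Y\cap\alpha|$ for every $\alpha<\mu$, so membership of $Y$ in $\overline{[\mu]^\theta}$ passes to $Z$; and if $Y\subseteq\alpha<\mu$ then so is $Z$. Hence, given a family $\mathcal D$ dense in $\lng[\mu]^\theta,\subseteq\rng$ of size $\mathcal D(\mu,\theta)$, the restriction $\mathcal D\cap\overline{[\mu]^\theta}$ is dense in $\overline{[\mu]^\theta}$ (for $Y$ in the upper part, any $Z\in\mathcal D$ with $Z\subseteq Y$ already lies in the upper part), and likewise $\mathcal D\cap\underline{[\mu]^\theta}$ is dense in $\underline{[\mu]^\theta}$. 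This gives $\overline{\mathcal D(\mu,\theta)}\le\mathcal D(\mu,\theta)$ and $\underline{\mathcal D(\mu,\theta)}\le\mathcal D(\mu,\theta)$, and summing yields the desired bound.

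For the reverse inequality I would use the dichotomy that, for an arbitrary $Y\in[\mu]^\theta$, either (i) $|Y\cap\alpha|=\theta$ for some $\alpha<\mu$, in which case $Y\cap\alpha$ is a $\subseteq$-subset of $Y$ of size $\theta$ bounded in $\mu$, i.e.\ $Y\cap\alpha\in\underline{[\mu]^\theta}$; or (ii) $|Y\cap\alpha|<\theta$ for all $\alpha<\mu$, which is exactly the statement that $Y\in\overline{[\mu]^\theta}$ (and forces $Y$ unbounded, as $\mu$ is a limit ordinal). Fixing $\mathcal D_1\subseteq\overline{[\mu]^\theta}$ dense in the upper part of size $\overline{\mathcal D(\mu,\theta)}$ and $\mathcal D_2\subseteq\underline{[\mu]^\theta}$ dense in the lower part of size $\underline{\mathcal D(\mu,\theta)}$, I then observe that $\mathcal D_1\cup\mathcal D_2$ is dense in $\lng[\mu]^\theta,\subseteq\rng$: in case (i) choose $Z\in\mathcal D_2$ with $Z\subseteq Y\cap\alpha\subseteq Y$, and in case (ii) choose $Z\in\mathcal D_1$ with $Z\subseteq Y$. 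Since $|\mathcal D_1\cup\mathcal D_2|\le\overline{\mathcal D(\mu,\theta)}+\underline{\mathcal D(\mu,\theta)}$, this completes the proof.

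There is no real obstacle; the argument is entirely elementary, and the only points demanding a little care are the bookkeeping of the trivial/empty cases noted above and the (typo-obscured) reading of the definition of $\overline{[\mu]^\theta}$. If one wanted to single out a ``main step'', it would be the dichotomy in the third paragraph — the recognition that every $\theta$-subset of $\mu$ either is already ``thin and cofinal'' or else contains a full-size bounded piece — together with the downward-closure observation in the second paragraph, which is what lets a global dense family be split into the two halves.
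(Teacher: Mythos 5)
Your proof is correct and follows essentially the same route as the paper's: one direction by restricting a minimal global dense family to the two parts (using that both $\overline{[\mu]^\theta}$ and $\underline{[\mu]^\theta}$ are downward closed under passing to $\theta$-sized subsets), and the other by the dichotomy that every $X\in[\mu]^\theta$ either contains a bounded $\theta$-sized piece or already lies in $\overline{[\mu]^\theta}$, so the union of the two one-sided dense families is dense. Your extra bookkeeping about degenerate cases and the sum being a maximum is harmless and only adds precision.
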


\begin{proof}
Given any $X\in [\mu]^\theta$, either there is some $\l<\mu$ such that 
$Y:=X\cap \l$ is of cardinality $\theta$ or else $X\in \overline {[\l]^\theta}$. Thus,
 $\overline{[\mu]^\theta}\cup \underline{ [\mu]^\theta}$ is dense in $[\mu]^\theta$ and therefore
  $\mathcal D(\mu,\theta) \le  \overline{\mathcal D(\l,\theta)}+
 \underline{ \mathcal D(\mu,\theta)}$ by taking the union of  dense subsets of 
 $\overline{[\mu]^\theta}$ and of $\underline{ [\mu]^\theta}$ of minimal cardinalities. 

Conversely, given a dense $\mathcal D\su [\mu]^\theta$ of minimal cardinality let 
$D_0=\mathcal D\cap \underline{[\mu]^\theta}$
 and let $\mathcal D_1=\mathcal D\cap \overline{[\mu]^\theta}$. Clearly, $\mathcal D_0$ is dense in 
 $\underline{ [\mu]^\theta}$. To see that $\mathcal D_1$ is dense in $\overline{[\mu]^\theta}$ let 
 $Y\in \overline{[\mu]^\theta}$ be arbitrary. Since $\mathcal D$ is dense, there is some 
 $X\in \mathcal D$ such that $X\su Y$. For all $\l<\mu$ it holds that $X\cap \l\su Y\cap \l$, so 
 $X\in \overline{[\mu]^\theta}$, and now $X\in \mathcal D_1$. 
\end{proof}

\noindent\textbf{Remark}: If $X\in \overline{[\mu]^\theta}$ then $\otp X=\theta$ and $X$ is cofinal in $\mu$, so consequently
 $\cf\theta=\cf\mu$. Thus, if $\cf\theta\not=\cf\mu$ it holds that  $\overline{[\mu]^\theta}=\emptyset$ and that $\mathcal D(\mu,\theta)=\underline{\mathcal D(\mu,\theta)}$.

\begin{claim} \label{sum}Suppose $\theta=\cf\mu<\mu$. Then:
\begin{enumerate}
\item Every maximal antichain in $\lng \overline{ [\mu]^\theta},\su \rng$ has cardinality $\ge \mu^+$. 
\item  $\overline{\mathcal D(\mu,\theta)}=|\mathcal A|+\mathcal D(\theta,\theta)$ whenever 
$\mathcal A\su \overline{[\mu]^\theta}$ is a maximal antichain in $\overline{[\mu]^\theta}$. 
\end{enumerate}
\end{claim}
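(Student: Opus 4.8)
The plan is to extract two structural facts about the poset $\lng\overline{[\mu]^\theta},\su\rng$ and then run a diagonalization for part (1) and a pair of cardinal estimates for part (2). Recall from the Remark that $\theta=\cf\mu$ is regular and that every $X\in\overline{[\mu]^\theta}$ is cofinal in $\mu$ with order type $\theta$. The first fact is that for $X\in\overline{[\mu]^\theta}$ and $Z\su X$ one has $Z\in\overline{[\mu]^\theta}$ iff $Z$ is cofinal in $\mu$ iff $|Z|=\theta$; hence two members $X,Y$ of $\overline{[\mu]^\theta}$ possess a common lower bound inside $\overline{[\mu]^\theta}$ exactly when $X\cap Y$ is cofinal in $\mu$, so $X$ and $Y$ are incompatible precisely when $X\cap Y$ is bounded in $\mu$. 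The second fact is that, once an increasing enumeration $X=\{x_i:i<\theta\}$ is fixed, $Z\mapsto\{i<\theta:x_i\in Z\}$ is an order isomorphism from $\lng\{Z\in\overline{[\mu]^\theta}:Z\su X\},\su\rng$ onto $\lng[\theta]^\theta,\su\rng$. Both are immediate from the definitions and the regularity of $\theta$.

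For part (1), I would show that any antichain $\mathcal A\su\overline{[\mu]^\theta}$ with $|\mathcal A|\le\mu$ fails to be maximal, by constructing $Y\in\overline{[\mu]^\theta}$ whose intersection with every member of $\mathcal A$ is bounded in $\mu$; by the first fact $\mathcal A\cup\{Y\}$ is then a strictly larger antichain. Fix an increasing sequence $\lng\mu_i:i<\theta\rng$ cofinal in $\mu$ and, using $\cf\mu=\theta$, write $\mathcal A=\bigcup_{i<\theta}\mathcal A_i$ with the $\mathcal A_i$ increasing and $|\mathcal A_i|<\mu$. Then choose $y_i$ by recursion on $i<\theta$ so that $y_i\ge\mu_i$, $y_i$ lies above all $y_j$ with $j<i$, and $y_i\notin\bigcup\{X:X\in\mathcal A_i\}$; this is possible since $\bigcup\{X:X\in\mathcal A_i\}$ has cardinality at most $|\mathcal A_i|\cdot\theta<\mu$ and only fewer than $\mu$ further ordinals are barred. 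The set $Y=\{y_i:i<\theta\}$ is increasing of order type $\theta$, and $y_i\ge\mu_i$ makes it cofinal and spread out in $\mu$, so $Y\in\overline{[\mu]^\theta}$; and for $X\in\mathcal A$, once $X\in\mathcal A_i$ we get $y_{i'}\notin X$ for every $i'\ge i$, so $X\cap Y$ is bounded. The only delicate point is the bookkeeping: the recursion has length exactly $\cf\mu$, and splitting $\mathcal A$ into $\theta$ pieces of size $<\mu$ is precisely what keeps every stage with fewer than $\mu$ forbidden ordinals, so this is where $\theta=\cf\mu$ enters.

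For part (2), fix a maximal antichain $\mathcal A$. I would first establish $\overline{\mathcal D(\mu,\theta)}\ge|\mathcal A|+\mathcal D(\theta,\theta)$. Let $\mathcal D\su\overline{[\mu]^\theta}$ be dense. Choosing for each $X\in\mathcal A$ some $d_X\in\mathcal D$ with $d_X\su X$, the map $X\mapsto d_X$ is injective because distinct members of $\mathcal A$ are incompatible while $d_X=d_{X'}$ would be a common lower bound of $X,X'$; hence $|\mathcal D|\ge|\mathcal A|$. Separately, fix any $X\in\overline{[\mu]^\theta}$: the set $\{d\in\mathcal D:d\su X\}$ is dense in $\lng\{Z\in\overline{[\mu]^\theta}:Z\su X\},\su\rng$ (if $d\in\mathcal D$ and $d\su Z\su X$ then $d$ witnesses density below $Z$), which by the second fact is isomorphic to $\lng[\theta]^\theta,\su\rng$, so transporting gives $|\mathcal D|\ge\mathcal D(\theta,\theta)$. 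Together these yield the lower bound.

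Finally, for $\overline{\mathcal D(\mu,\theta)}\le|\mathcal A|+\mathcal D(\theta,\theta)$, enumerate $\mathcal A=\{X_\alpha:\alpha<|\mathcal A|\}$ with increasing enumerations $X_\alpha=\{x^\alpha_i:i<\theta\}$, fix a dense $\mathcal E\su[\theta]^\theta$ of size $\mathcal D(\theta,\theta)$, and set $\mathcal D=\{\{x^\alpha_i:i\in E\}:\alpha<|\mathcal A|,\,E\in\mathcal E\}$, a subset of $\overline{[\mu]^\theta}$ of cardinality at most $|\mathcal A|+\mathcal D(\theta,\theta)$. To see $\mathcal D$ is dense in $\overline{[\mu]^\theta}$, take $Y\in\overline{[\mu]^\theta}$; maximality of $\mathcal A$ furnishes $\alpha$ with $X_\alpha\cap Y$ cofinal in $\mu$ — taking $X_\alpha=Y$ if $Y\in\mathcal A$, and otherwise using that $\mathcal A\cup\{Y\}$ is not an antichain together with the first fact. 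Then $I=\{i<\theta:x^\alpha_i\in Y\}$ is unbounded in $\theta$, hence in $[\theta]^\theta$, so some $E\in\mathcal E$ has $E\su I$, and $\{x^\alpha_i:i\in E\}\su X_\alpha\cap Y\su Y$ is a member of $\mathcal D$ below $Y$. I expect the main obstacle in the whole argument to be this last density verification — specifically, unwinding maximality of $\mathcal A$ correctly to produce the index $\alpha$; the rest is a straightforward transfer through the isomorphism of the second fact.
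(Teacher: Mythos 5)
Your proof is correct and follows essentially the same route as the paper: part (2) uses the same injective selection $X\mapsto d_X$ for $|\mathcal A|\le|\mathcal D|$, the same restriction-to-a-cone argument for $\mathcal D(\theta,\theta)\le|\mathcal D|$ (your order isomorphism with $[\theta]^\theta$ is just an explicit form of the paper's observation that $\mathcal D\cap[X]^\theta$ is dense in $[X]^\theta$), and the same union of dense families over the antichain for the upper bound. For part (1) the paper only says ``standard diagonalization,'' and your recursion of length $\theta=\cf\mu$ against a decomposition $\mathcal A=\bigcup_{i<\theta}\mathcal A_i$ with $|\mathcal A_i|<\mu$ is precisely that argument, carried out correctly.
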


\begin{proof}The first item is proved by standard diagonalization. 

To prove the second let $\mathcal A\su \overline{[\mu]^\theta}$ be a maximal antichain in  $\overline{[\mu]^\theta}$. Since the intersection of two distinct members from $\overline{[\mu]^\theta}$ belongs to $\overline{[\mu]^\theta}$ if an only if it belongs to $[\mu]^\theta$, $\mathcal A$ is an antichain in $[\mu]^\theta$ as well. 

For every $X\in \mathcal A$ fix a dense $\mathcal D_X$ in $\lng[X]^\theta,\su\rng$ of cardinality $\mathcal D(\theta,\theta)$ and let $\mathcal D=\bigcup\{ \mathcal D_X: X\in \mathcal A\}$. The cardinality of $\mathcal D$ is $|\mathcal A|+\mathcal D(\theta,\theta)$ and as every $Z\in \mathcal D_X$ for $X\in \mathcal A$ belongs to $\overline{[\mu]^\theta}$, we have that $\mathcal D\su \overline{[\mu]^\theta}$. Given any $Y\in \overline{[\mu]^\theta}$, there exists some $X\in \mathcal A$ such that $Y\cap X\in [X]^\theta$ and therefore there is some $Z\in \mathcal D_X$ such that $Z\su Y$. This establishes that $\mathcal D$ is dense in $\overline{[\mu]^\theta}$. 

Conversely, let $\mathcal D\su \overline{[\mu]^\theta}$ be dense in $\overline{[\mu]^\theta}$ and let $\mathcal A\su \overline{[\mu]^\theta}$ be an antichain in $\overline{[\mu]^\theta}$. Let $f:\mathcal A\to D$ be such that $f(X)\su X$ for all $X\in \mathcal A$. As $\mathcal A$ is an antichain, $f$ is injective and hence $|\mathcal A|\le |\mathcal D|$. If $X\in \overline{[\mu]^\theta}$ then $\mathcal D\cap [X]^\theta$ is dense in $[X]^\theta$ and hence $\mathcal D(\theta,\theta)\le |\mathcal D|$. 
\end{proof} 

\begin{corollary}\label{anti} If $\theta=\cf\mu<\mu$ and $\mathcal D(\theta,\theta)\le \mu^+$  then every maximal antichain in $\overline{[\mu]^\theta}$ has
 cardinality $\overline{\mathcal D(\mu,\theta)}$. 
\end{corollary}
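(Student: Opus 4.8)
The plan is to read this off directly from the two clauses of Claim \ref{sum}, together with elementary cardinal arithmetic. Fix a maximal antichain $\mathcal A \su \overline{[\mu]^\theta}$. By clause (1) of Claim \ref{sum} we have $|\mathcal A| \ge \mu^+$, and by clause (2) we have $\overline{\mathcal D(\mu,\theta)} = |\mathcal A| + \mathcal D(\theta,\theta)$.

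Now invoke the hypothesis $\mathcal D(\theta,\theta) \le \mu^+$. Since $\mathcal D(\theta,\theta) \le \mu^+ \le |\mathcal A|$ and both summands are infinite cardinals, the sum $|\mathcal A| + \mathcal D(\theta,\theta)$ equals $\max\{|\mathcal A|, \mathcal D(\theta,\theta)\} = |\mathcal A|$. Substituting into the formula from clause (2) gives $\overline{\mathcal D(\mu,\theta)} = |\mathcal A|$. Since $\mathcal A$ was an arbitrary maximal antichain in $\overline{[\mu]^\theta}$, every such antichain has cardinality exactly $\overline{\mathcal D(\mu,\theta)}$, which is the claim.

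There is essentially no obstacle here: the corollary is a formal consequence of Claim \ref{sum}, the only ingredient beyond it being the absorption of a smaller cardinal into a larger one in a cardinal sum. The one point worth stating carefully is that clause (1) supplies the lower bound $|\mathcal A| \ge \mu^+$ that makes the term $\mathcal D(\theta,\theta)$ negligible under the hypothesis — without clause (1) one would only get $\overline{\mathcal D(\mu,\theta)} = |\mathcal A|$ under the weaker conclusion $\mathcal D(\theta,\theta) \le |\mathcal A|$, so it is precisely the interplay of the two clauses of Claim \ref{sum} that yields the stated uniform value.
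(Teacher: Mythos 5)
Your proof is correct and is exactly the argument the paper intends: the corollary is stated without proof as an immediate consequence of the two clauses of Claim \ref{sum}, with clause (1) giving $|\mathcal A|\ge\mu^+$ so that the hypothesis $\mathcal D(\theta,\theta)\le\mu^+$ lets the term $\mathcal D(\theta,\theta)$ be absorbed in the sum of clause (2). Nothing further is needed.
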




%
%
%
%
%
%

We phrase  now  the first Theorem. It is a version of Silver's theorem for the density function. 

\begin{theorem}\label{densesilver} Suppose $\k=\cf\k<\theta=\cf\mu<\mu$. 
If $\underline{\mathcal D(\mu,\theta)}\le \mu^+$  and the set $\{\l<\mu: \cf\l=\k \wedge \overline{\mathcal D(\l,\k)}=\l^+\}$ 
is a stationary subset  of $\mu$,
then $\mathcal D(\mu,\theta)=\mu^+$. 
\end{theorem}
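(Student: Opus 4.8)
The plan is to follow the Baumgartner–Prikry strategy for Silver's theorem, adapted to density as the paper's introduction indicates. By Claim~\ref{sum} and the hypothesis $\underline{\mathcal D(\mu,\theta)}\le\mu^+$, it suffices to show $\overline{\mathcal D(\mu,\theta)}\le\mu^+$, and by Corollary~\ref{anti} this in turn reduces to producing a maximal antichain in $\langle\overline{[\mu]^\theta},\su\rangle$ of cardinality $\le\mu^+$ (the lower bound $\mu^+$ already being Claim~\ref{sum}(1)). So the whole problem becomes: construct a family $\mathcal A\su\overline{[\mu]^\theta}$ of size $\mu^+$ that is maximal with respect to inclusion below elements of $\overline{[\mu]^\theta}$.

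\textbf{Setup.} First I would fix an increasing continuous sequence $\langle\mu_i:i<\theta\rangle$ of cardinals cofinal in $\mu$, with $\mu_0>\theta$, chosen so that the stationary set $S=\{\l<\mu:\cf\l=\k\wedge\overline{\mathcal D(\l,\k)}=\l^+\}$ is captured: since $S$ is stationary in $\mu$ and $\theta=\cf\mu$, the set $C=\{i<\theta:\mu_i\in S\}$ is stationary in $\theta$ (after thinning to the continuity points). For each $i\in C$ fix a maximal antichain $\mathcal B_i\su\overline{[\mu_i]^\k}$ of size $\mu_i^+$, which exists because $\overline{\mathcal D(\mu_i,\k)}=\mu_i^+$ and hence, via Corollary~\ref{anti} applied at $\mu_i$ (whose hypothesis $\mathcal D(\k,\k)\le\mu_i^+$ is automatic), every maximal antichain there has that size. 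Next I would invoke a pcf scale: an increasing sequence $\langle f_\alpha:\alpha<\mu^+\rangle$ in $\prod_{i<\theta}\mu_i^+$ that is $<^*$-increasing and such that every $g\in\prod_{i<\theta}\mu_i^+$ is dominated by some $f_\alpha$ — this is where the pcf input replaces Baumgartner–Prikry's indirect argument, and it exists since $\max\pcf\{\mu_i^+:i<\theta\}\ge\mu^+$ and we may assume (shrinking $\mu$ if necessary, or just for the relevant cofinal segment) the true cofinality is exactly $\mu^+$, or more robustly use an eub/scale of length $\mu^+$ that is cofinal. For each $\alpha<\mu^+$, enumerate $\mathcal B_i=\{B^i_\xi:\xi<\mu_i^+\}$ and set $X_\alpha=\bigcup\{B^i_{f_\alpha(i)}:i\in C\}$; this is a subset of $\mu$ of order type... one checks it lies in $\overline{[\mu]^\theta}$, the key point being that each $B^i_{f_\alpha(i)}$ is bounded inside $\mu_i$ (it is in $\overline{[\mu_i]^\k}$, so cofinal in $\mu_i$ of size $\k$) and they are stacked along a cofinal set of indices, so $X_\alpha$ is cofinal in $\mu$ of size $\theta$ and every proper initial segment has size $<\theta$.

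\textbf{Maximality via pressing down.} The family $\mathcal A=\{X_\alpha:\alpha<\mu^+\}$ has size $\le\mu^+$. To see it is a maximal antichain in $\overline{[\mu]^\theta}$, let $Y\in\overline{[\mu]^\theta}$ be arbitrary; I must find $\alpha$ with $X_\alpha\cap Y\in[\mu]^\theta$, equivalently (since both are in $\overline{[\mu]^\theta}$) with $X_\alpha\cap Y$ of size $\theta$. For each $i\in C$, the set $Y\cap\mu_i$ — when it has size $\k$, which happens on a club-in-$\theta$ set of $i$'s by continuity of the $\mu_i$ and $\cf\mu=\theta>\k$ — lies in $\overline{[\mu_i]^\k}$ (as $Y\in\overline{[\mu]^\theta}$ forces its $\mu_i$-initial segment to be cofinal in $\mu_i$ once it has the right size), so by maximality of $\mathcal B_i$ there is $\xi_i<\mu_i^+$ with $B^i_{\xi_i}\cap Y$ of size $\k$. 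Define $g\in\prod_i\mu_i^+$ by $g(i)=\xi_i$ on that club and $g(i)=0$ off it. By the scale property, pick $\alpha<\mu^+$ with $g<^* f_\alpha$ — but here is the subtlety: I need $f_\alpha(i)=\xi_i$ for a cofinal set of $i\in C$, not merely $f_\alpha(i)>\xi_i$. This is handled exactly as in Baumgartner–Prikry by a pressing-down argument: for each $Y$ and each $\alpha$ above $g$, the ordinal-valued function $i\mapsto$ (the $<^*$-least $\beta$ with $f_\beta(i)=\xi_i$, if it exists) is pressed down; alternatively, and more cleanly, one arranges the scale so that $\{f_\alpha:\alpha<\mu^+\}$ is \emph{locally dense} — every $g$ is hit on a cofinal set by some $f_\alpha$ — which a standard pcf construction delivers, or one replaces the single antichain $\mathcal B_i$ by listing ALL of $\overline{[\mu_i]^\k}$ and using the full density $\overline{\mathcal D(\mu_i,\k)}=\mu_i^+$ to guarantee, for any $Y$, a member of the dense set sitting below $Y\cap\mu_i$ whose index we can match cofinally often.

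\textbf{The main obstacle.} The genuinely delicate point is this matching step — ensuring that for every $Y\in\overline{[\mu]^\theta}$ some single $X_\alpha$ meets $Y$ in a set of full size $\theta$, rather than merely meeting it in size $\k$ at finitely many or boundedly many coordinates. This is precisely the place where Baumgartner and Prikry use Fodor's theorem, and the pcf scale is what makes the combinatorics go through uniformly: one shows that the set of $i\in C$ where the matching fails is non-stationary in $\theta$ by pressing down the ``first witnessing index'' function, using that the scale is increasing and cofinal. I would write this step out in full detail; everything else (the verification that $X_\alpha\in\overline{[\mu]^\theta}$, the size count $|\mathcal A|\le\mu^+$, the reduction via Claims~\ref{sum} and Corollary~\ref{anti}) is routine bookkeeping. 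The lower bound $\mathcal D(\mu,\theta)\ge\mu^+$ is immediate since $[\mu]^\theta$ has no countable cofinal... rather, since any dense subset of $[\mu]^\theta$ must have size $>\mu$ by a diagonalization like Claim~\ref{sum}(1) together with $\underline{\mathcal D(\mu,\theta)}\ge\mu$ trivially; combined with the upper bound just proved, $\mathcal D(\mu,\theta)=\mu^+$.
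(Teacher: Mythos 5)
Your reduction (via Claim 2.3 and Corollary \ref{anti}) is fine, but from that point on you invert the direction of the argument, and the inversion is where the proof breaks. The paper does not construct a maximal antichain of size $\mu^+$; it takes an \emph{arbitrary} antichain $\mathcal A\su\overline{[\mu]^\theta}$ and bounds $|\mathcal A|$ by $\mu^+$ (which suffices, since by Corollary \ref{anti} all maximal antichains have the same cardinality $\overline{\mathcal D(\mu,\theta)}$, and it is $\ge\mu^+$ by Claim 2.3(1)). Concretely: from each $X\in\mathcal A$ one reads off a function $f_X\in\prod_{i\in S}\k_i^+$ by choosing, from a fixed dense $\mathcal D_i\su\overline{[\k_i]^\k}$ of size $\k_i^+$, a member below $X\cap\k_i$; the scale is used only in the easy direction, to find $\a(X)$ with $f_X<_{NS\rest S^1_X}f_{\a(X)}$; and the whole content is the claim that each fiber of $X\mapsto\a(X)$ has size $\le\mu^+$, proved by Fodor plus a coding $X\mapsto\lng j(X),Z(X)\rng$ whose \emph{injectivity} is exactly where the antichain property ($|X\cap Y|<\theta$ for distinct $X,Y$) enters. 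Your proposal never uses almost-disjointness at all, which is a sign that the essential mechanism is missing.

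The two concrete gaps in your construction are the following. (a) The family $\{X_\a:\a<\mu^+\}$ need not be an antichain: $f_\a<_{NS\rest S}f_\b$ only makes $\{i:f_\a(i)=f_\b(i)\}$ non-stationary, and a non-stationary subset of $\theta$ can still be unbounded, in which case $X_\a\cap X_\b\supseteq\bigcup\{B^i_{f_\a(i)}:f_\a(i)=f_\b(i)\}$ has cardinality $\theta$. Without the antichain property, Claim 2.3(2) and Corollary \ref{anti} give you nothing about $\overline{\mathcal D(\mu,\theta)}$ from this family. (b) The ``maximality'' step, which you correctly flag as the delicate point, requires \emph{exact} agreement $f_\a(i)=\xi_i$ on an unbounded set of $i$, and a pcf scale provides only domination modulo an ideal. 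The ``locally dense'' scale you invoke is not a standard pcf object and you give no construction (indeed, producing $\mu^+$ many functions that cofinally match every $g\in\prod_i\k_i^+$ is a covering-type statement of essentially the same strength as what is being proved), while your alternative pressing-down sketch does not identify a regressive function on a stationary set or explain how its constancy yields the needed matching. These are not bookkeeping issues; they are the theorem. The fix is the paper's: run Fodor and the injective coding on the members of a given antichain, rather than trying to build one.
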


 \begin{proof}Since $\underline{\mathcal D(\mu,\theta)} \le \mu^+\le\overline{\mathcal D(\mu,\theta)}$ 
 and $\mathcal D(\mu,\theta)= \underline{\mathcal D(\mu,\theta)} + \overline{\mathcal D(\mu,\theta)}$, it holds that $\mathcal D(\mu,\theta)= \overline{\mathcal D(\mu,\theta)}$. 
 By Corollary \ref{anti}, $\mathcal D(\mu,\theta)$ is equal to the cardinality of any maximal antichain in $\lng \overline {[\mu]^\theta},\su\rng$. So it suffices to prove that $|\mathcal A|\le \mu^+$ for any  antichain
 $\mathcal A\su \overline{[\mu]^\theta}$.  Fix such an antichain $\mathcal A$.

 Let $\lng \k_i:i<\theta\rng$ be a strictly increasing and continuous sequence 
 of cardinals converging to $\mu$ with $\theta<\k_0$. Let $S=\{i<\theta: \cf i =\k \wedge \overline{\mathcal D(\k_i,\k)}=\k_i^+\}$. By the assumptions, $S$ is a stationary subset of $\theta$.

  By a basic pcf theorem \cite{CA}, $\tcf {\lng \prod_{i\in S}\k_i^+,<_{NS\rest S}\rng}=\mu^+$, so we can fix \emph{ a pcf scale}
$\ov f=\lng f_\a:\a<\mu^+\rng\su \prod_{i<\theta} \k_i^+$, that is, a sequence which is $<_{NS\rest S}$-increasing ---
 $\a<\b<\mu^+ \imply \{i\in S: f_\a(i)\ge f_\b(i)\}$ is non-stationary --- and $<_{NS\rest S}$-cofinal --- for every $f\in \prod_{i\in S}\k_i^+$
 there is some $\a<\mu^+$ such that  $\{i\in S: f(i)>f_\a(i)\}$ is non-stationary. We shall only use  the cofinality of the scale.

Now fix, for every  $i\in S$ a
  dense set $\mathcal D_i$  in $\lng \overline{[\k_i]^\k},\su\rng$ and an enumeration
 $\mathcal D_i=\{Z^i_j:j<\k_i^+\}$, and  also a
  dense set $\mathcal D'_i\su [\theta\times \k_i]^\theta$ with $|\mathcal D'|\le \mu^+$. 
 
  Given $X\in \mathcal A$ let $C_X=\{i<\theta: \forall j (j< i \imply X\cap \k_i \not\su k_j)\}$, which is clearly a club of
   $\theta$. Let $S^1_X=C_X\cap S$. As an intersection of a club with a stationary subset, $S^1_X$ is stationary in $\theta$. 
   
   For each $i\in S^1_X$ let $f_X(i)=j<\k_i^+$ be such that
    $Z^i_j\in [\k_i]^{\k}$ is a subset of $X\cap \k_i$ which is cofinal in $\k_i$ and of order-type $\k$. 
    Such $j$ exists because $X\cap \k_i$ is cofinal, $\cf\k_i=\k$ and $\mathcal D_i$ is dense in $\lng \overline{[\k_i]^{\k}},\su\rng$. 
    
    As 
 $\ov f$ remains a scale when $NS\rest S$ is extended to $NS\rest S^1_X$, there is some $\a(X)<\mu^+$
  such that $f_X<_{NS\rest S^1_X} f_{\a(X)}$.

  \begin{claim}For every  $\a<\mu^+$, at most $\mu^+$ many $X\in \mathcal A$ satisfy that $\a=\a(X)$. 
  \end{claim}
  
  \begin{proof}
  Let $\a<\mu^+$ be fixed and for every $i\in S^\theta_\k$ let us fix an injection $g_i:f_\a(i)\to \k_i$. 
  Suppose $X\in \mathcal A$ satisfies that $\a=\a(X)$, so $f_X<_{NS\rest S^1_X} f_\a$. 
  
  By shrinking $S^1_X$ we may assume that $f_X(i)<f_\a(i)$ for all $i\in S^1_X$. 
  For each $i\in S^1_X$ let $r(i)=\min\{j<i: g(f_X(i))<\k_j\}$. Since $\k_i$ is limit, $r$ is well-defined and is a regressive function on $S^1_X$. 
  
  By Fodor's lemma, there is some 
  stationary $S^2_X\su S^1_X$ and  some fixed $j(X)<\theta$ such that $r(i)=j(X)$ for all $i\in S^2_X$. 
  Let $h_X(i):=g_i(f_X(i))\in k_{j(X)}$ for all $i\in S^2_X$. Now the function $h_X:S^2_X\to \k_{j(X)}$ (which is a set of ordered pairs) 
  is a subset of
   $\theta\times \k_{j(X)}$.   Let $Z(X)\in \mathcal D'_{j(X)}$ be chosen such that $Z(X)\su h_X$ (so $Z(X)$ is a partial function from $\theta$ to $\k_{j(X)}$). 
  
  Suppose $X,Y\in \mathcal A$ are distinct and suppose that $j(X)=j(Y)$. If $Z=Z(X)=Z(Y)$ then $\dom Z$ is unbounded in $\theta$ and for every  $i\in \dom Z$ the set $Z^i_{f_X(i)}=Z^i_{f_Y(i)}$ is unbounded in $\k_i$ and contained in $X\cap Y$. Hence $|X\cap Y|=\theta$ --- a contradiction to $|Z\cap Y|<\theta$.   
  
 Thus, the mapping $X\mapsto\lng j(X), Z(X)\rng$ is injective 
  on the set of all $X\in \mathcal A$ such that $f_X<_{NS} f_\a$. As there are at most 
  $|\mathcal D'_{j(X)}|+\theta\le \mu^+$ such pairs, we are done. 
   \end{proof}
  The theorem follows  immediately from the claim. 
  \end{proof}

  \subsection{The general version}Throrem \ref{densesilver} above is formulated after the most popular version of Silver's theorem. Silver's original paper as well as \cite{BaumPrikDisc} included, however, a more general formulation, involving the $\gamma$-th successors of $\k_i$ and of $\mu$ for arbitrary ordinals $\gamma<\theta$. The case $\gamma=0$ in the general case is actually a theorem by Erd\H os, Hajnal and Milner from 1967 about almost disjoint families \cite{EHM} (for more on the history  see  \cite{aki}). 
  
%
%
%
%

\medskip
Let $\mathcal S^\theta_\k$, for $\k=\cf\k<\theta=\cf\theta$,  denote the family of all stationary subsets of $S^\theta_\k=\{\a<\theta:\cf\a=\k\}$.

\begin{theorem}\label{general} Suppose $\k=\cf\k<\theta=\cf\mu<\mu$ and that $\lng \k_i:i<\theta\rng$ is an 
increasing and continuous sequence of cardinals with limit $\mu$ and $\theta<\k_0$. 

Let $\mathcal A\su \lng \overline{[\mu]^\theta},\su\rng$ be an antichain and let $\gamma<\theta$ be an ordinal. 

Suppose that there exists  a sequence $\lng \mathcal D_i:i\in S^\theta_\k\rng$ such that 
\begin{enumerate}
\item $\mathcal D_i\su \overline{[\k_i]^\k}$ and $|\mathcal D_i|\le \k_i^{+\gamma}$ for all $i\in S_\k^\theta$;
\item for every $A\in \mathcal A$ the set $S_A:=\{i\in S_\k^\theta: (\exists X\in \mathcal D_i)(X\su A)\}$ is stationary.
\end{enumerate}
 Then
    $|\mathcal A|\le \mu^{+\gamma}+\underline{\mathcal D(\mu,\theta)}+\mathcal D(\mathcal S^\theta_\kappa,\su)$.
\end{theorem}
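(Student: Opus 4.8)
The plan is to mimic the proof of Theorem \ref{densesilver}, tracking the three sources of the new upper bound. As before, the key object is a function $X\mapsto f_X$ on the antichain $\mathcal A$, where for each $A\in\mathcal A$ we first thin the stationary set $S_A$ to a club-relative stationary set $S^1_A=S_A\cap C_A$ (with $C_A=\{i<\theta:\forall j<i\,(A\cap\k_i\not\su\k_j)\}$ a club), and for each $i\in S^1_A$ we pick $X^A_i\in\mathcal D_i$ with $X^A_i\su A$. Enumerating each $\mathcal D_i$ as $\{Z^i_j:j<\k_i^{+\gamma}\}$, this gives $f_A\in\prod_{i<\theta}\k_i^{+\gamma}$ (defined on $S^1_A$). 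The set $S^1_A$ lies in $\mathcal S^\theta_\kappa$; here is where the term $\mathcal D(\mathcal S^\theta_\kappa,\su)$ enters: fix a dense family $\mathcal E\su\mathcal S^\theta_\kappa$ of size $\mathcal D(\mathcal S^\theta_\kappa,\su)$ and choose $S^*_A\in\mathcal E$ with $S^*_A\su S^1_A$. Then $f_A$ is still defined on $S^*_A$ and the sets $Z^i_{f_A(i)}$ for $i\in S^*_A$ are cofinal-in-$\k_i$ subsets of $A$.

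The second step replaces the pcf scale of length $\mu^+$ by one adapted to the parameter $\gamma$: for each $S\in\mathcal E$, since $|\mathcal E|\le\mu$ and $\prod_{i\in S}\k_i^{+\gamma}$ has true cofinality at most $\mu^{+\gamma}$ modulo the nonstationary ideal (this is the only place $\mu^{+\gamma}$ is used — it absorbs $|\mathcal E|$ as well as the scale length), fix a $<_{\mathrm{NS}\rest S}$-cofinal family $\{f^S_\a:\a<\mu^{+\gamma}\}\su\prod_{i\in S}\k_i^{+\gamma}$. For each $A$ let $\a(A)<\mu^{+\gamma}$ be such that $f_A<_{\mathrm{NS}\rest S^*_A}f^{S^*_A}_{\a(A)}$. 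The map $A\mapsto\lng S^*_A,\a(A)\rng$ has at most $|\mathcal E|\cdot\mu^{+\gamma}=\mu^{+\gamma}$ values, so it suffices to bound, for fixed $S\in\mathcal E$ and fixed $\a<\mu^{+\gamma}$, the number of $A\in\mathcal A$ with $S^*_A=S$ and $\a(A)=\a$. I would call this sub-bound $\mu^{+\gamma}+\underline{\mathcal D(\mu,\theta)}$ and handle it by the Fodor-plus-auxiliary-dense-family argument of Theorem \ref{densesilver}: thin $S$ to where $f_A(i)<f^S_\a(i)$, fix injections $g_i\colon f^S_\a(i)\to\k_i$, set $r(i)=\min\{j<i:g_i(f_A(i))<\k_j\}$, apply Fodor to get a fixed $j(A)<\theta$ and a stationary $S^2_A\su S$, and let $h_A(i)=g_i(f_A(i))\in\k_{j(A)}$, so $h_A\su\theta\times\k_{j(A)}$.

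Here the term $\underline{\mathcal D(\mu,\theta)}$ (rather than a separately posited family $\mathcal D'_i$) is used: $\theta\times\k_{j(A)}$ has cardinality $\k_{j(A)}<\mu$, and $h_A$ is an unbounded subset of it of order type $\theta$, hence lies in $[\theta\times\k_{j(A)}]^\theta\su\underline{[\mu]^\theta}$ after identifying $\theta\times\k_{j(A)}$ with an ordinal below $\mu$; fixing a dense $\mathcal D^*\su\underline{[\mu]^\theta}$ of size $\underline{\mathcal D(\mu,\theta)}$, pick $Z(A)\in\mathcal D^*$ with $Z(A)\su h_A$. The collision argument is unchanged: if $A\ne A'$ with $j(A)=j(A')$ and $Z(A)=Z(A')=Z$, then $\dom Z$ is unbounded in $\theta$ and for each $i\in\dom Z$ the common set $Z^i_{f_A(i)}=Z^i_{f_{A'}(i)}$ is cofinal in $\k_i$ and contained in $A\cap A'$, giving $|A\cap A'|=\theta$ against $|Z\cap A'|<\theta$. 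Thus $A\mapsto\lng j(A),Z(A)\rng$ is injective on the relevant slice, which has at most $\theta+\underline{\mathcal D(\mu,\theta)}\le\mu^{+\gamma}+\underline{\mathcal D(\mu,\theta)}$ values. Summing over the $\le\mu^{+\gamma}$ choices of $\lng S,\a\rng$ yields $|\mathcal A|\le\mu^{+\gamma}+\underline{\mathcal D(\mu,\theta)}+\mathcal D(\mathcal S^\theta_\kappa,\su)$, since $|\mathcal E|=\mathcal D(\mathcal S^\theta_\kappa,\su)$. The main obstacle is bookkeeping: one must be careful that the dense family $\mathcal E$ of stationary sets is fixed \emph{before} the pcf scales, so that only $|\mathcal E|$-many scales are needed and $|\mathcal E|\le\mu\le\mu^{+\gamma}$ keeps the first term from inflating — and that thinning $S^1_A$ down into $\mathcal E$ does not destroy the property that $f_A(i)$ indexes a cofinal subset of $A$, which holds automatically since we only remove indices.
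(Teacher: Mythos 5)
Your proposal reduces the whole theorem to a single scale argument, and the step that carries all the weight is unjustified: you assert that $\prod_{i\in S}\k_i^{+\gamma}$ has a $<_{NS\rest S}$-cofinal family of cardinality $\mu^{+\gamma}$ for an arbitrary ordinal $\gamma<\theta$. The pcf theorem invoked in Theorem \ref{densesilver} gives exactly this for $\gamma=1$ (true cofinality $\mu^+$ of $\prod\k_i^+$ modulo the nonstationary ideal), but it does not extend to higher successors: for $\gamma\ge 2$ the statement $\cf\bigl(\prod_{i\in S}\k_i^{+\gamma},<_{NS\rest S}\bigr)\le\mu^{+\gamma}$ is not a theorem of \textsf{ZFC}, and for limit $\gamma$ the factors $\k_i^{+\gamma}$ are singular, so there is no true cofinality to speak of and $\mu^{+\gamma}$ is itself singular. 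If such scales were available for every $\gamma$, the general Silver-type theorem would follow by the same two-page argument as the central case; the reason both Baumgartner--Prikry and the paper resort to a genuinely different, inductive argument in the general case is precisely that they are not.

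The paper's proof proceeds by induction on $\gamma$. For $\gamma=0$ it is your Fodor-plus-dense-family count (no scale needed). For $\gamma=\beta+1$ it first proves that any \emph{single} function $g\in\prod_{i\in S}\k_i^{+\gamma}$ dominates at most $\l(\beta)$ members of $\mathcal A$ on a stationary set --- not by cofinality of a scale, but by applying the induction hypothesis to the shrunken dense sets $\mathcal D^g_i=\{X\in\mathcal D_i:t_i(X)<g(i)\}$ of size $\le\k_i^{+\beta}$. It then introduces the relation $A\mathbin{R}B$ (``$g_A<g_B$ stationarily often''), notes that any two distinct members with the same synchronizing index $\zeta$ are $R$-comparable, and runs a transfinite ``tournament'' construction to conclude $|\mathcal A_\zeta|\le\l(\beta)$; the limit case uses $\theta$-completeness of the nonstationary ideal to drop to some $\beta(A)<\gamma$. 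None of this inductive mechanism appears in your write-up, and without it (or a proof of your scale claim, which you would need to supply and which I do not believe is available) the argument does not go through. A minor additional slip: $|\mathcal E|=\mathcal D(\mathcal S^\theta_\k,\su)$ need not be $\le\mu$ (it can be as large as $2^\theta$); this happens not to damage your final additive bound, but the assertion as written is false. Your identification of where $\underline{\mathcal D(\mu,\theta)}$ enters (replacing the families $\mathcal D'_i\su[\theta\times\k_i]^\theta$) and the collision argument via almost disjointness are correct and do match the paper.
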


We remark that  if $2^\theta<\mu$ then  $\mathcal D(\mathcal S^\theta_\kappa,\su)$ can be removed from the conclusion,  giving  $|\mathcal A|\le \mu^{+\gamma}+\underline{\mathcal D(\mu,\theta)}$, and if $\mathcal D(\k_i,\theta)<\mu$ for all $i$ then also $\underline{\mathcal D(\mu,\theta)}$ can be removed. In the latter  case the theorem  has a meaningful content also in the case $\gamma=0$.

\begin{proof} 
Suppose $\k,\theta,\mu,\mathcal A,\gamma$ and $\lng \mathcal D_i:i\in S^\theta_\k\rng$ are as stated in the hypothesis of the theorem and fix in addition, for each $i\in S^\theta_\k$, an injection $t_i:\mathcal D_i\to \k_i^{+\gamma}$. 
Fix   also a
  dense set $\mathcal D'_i\su [\theta\times \k_i]^\theta$ of  cardinality $\mathcal D(\k_i,\theta)$  and an enumeration $\{S_\zeta:\zeta<\zeta(*)\}$  of a dense subset of $\lng\mathcal S^\theta_\k,\su\rng$ for  $\zeta(*)=\mathcal D(\mathcal S,\su)$. 
  
  To save on notation let us abbreviate the term  $\mu^{+\gamma}+\underline{\mathcal D(\mu,\theta)}+\mathcal D(\mathcal S^\theta_\kappa,\su)$ by   $\l(\gamma)$ for each $\gamma<\theta$.

For each $A\in \mathcal A$ let $g_A:S_A\to \k_i^{+\gamma}$ by letting $g_A(i):=t_i(X)$ be the least  of such that $X\su A$

%

The proof  proceeds now by induction on $\gamma<\theta$ to show that $|\mathcal A|\le \l(\gamma)$.

Assume $\gamma=0$. Then for each $A\in \mathcal A$ and $i\in S(A)$ it holds that $g_A(i)<\k_i$.  By Fodor's lemma there is some $j(A)<\theta$ and a stationary $S^1_A\su S_A$ so that $\ran(h_A\rest S^1_A)\su \k_{j(A)}$.  
Let $Y(A)\in \mathcal D'_{j(A)}$ such that $Y(A)\su g_A\rest S^1_A$. As in the previous proof, the mapping $A\mapsto \lng j(A),Y(A)\rng$ is injective.  The number of possibile pairs $\lng j(A),Y(A)\rng$ is at most $\mathcal D(\k_i,\theta)\times \mu$ so we have established 
$|\mathcal A| \le \mu +\underline{\mathcal D(\k_i,\theta)}\le \l(0)$. Observe that  $\mathcal D(\mathcal S^\theta_\k,\su)$ was not used in this case!

Now assume $\gamma=\beta+1$. 

\begin{claim}\label{first} For every $g\in \prod_{i\in S}\k_i^{+\gamma}$ there are at most $\l(\beta)$ members $A\in \mathcal A$ for which
there exists  some stationary $S'\su S_A$ such that $g_A(i)<g(i)$ for all $i\in S'$.  
\end{claim}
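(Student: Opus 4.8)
\textbf{Plan for Claim \ref{first}.}
The plan is to fix $g\in\prod_{i\in S^\theta_\k}\k_i^{+\gamma}$ and to code each relevant $A\in\mathcal A$ by a pair consisting of a member of a dense subset of $\mathcal S^\theta_\k$ together with a member of a suitably chosen antichain of size at most $\l(\beta)$, so that the coding is injective; since the number of such pairs is at most $\mathcal D(\mathcal S^\theta_\k,\su)\cdot\l(\beta)=\l(\beta)$ (note $\gamma=\beta+1$, so $\l(\beta)\le\l(\gamma)$, and in fact here we want the bound $\l(\beta)$), the claim follows. Concretely, for each $A$ as in the hypothesis, fix a witnessing stationary $S'=S'_A\su S_A$ with $g_A(i)<g(i)$ on $S'_A$. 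Using density of $\{S_\zeta:\zeta<\zeta(*)\}$ in $\lng\mathcal S^\theta_\k,\su\rng$, pick $\zeta(A)<\zeta(*)$ with $S_{\zeta(A)}\su S'_A$; then $g_A\rest S_{\zeta(A)}$ still lies coordinatewise below $g$ on a stationary set, namely $S_{\zeta(A)}$ itself.

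The key step is to reduce, uniformly over the fibre $\zeta(A)=\zeta$, to an instance of the inductive hypothesis at $\beta$. For a fixed $\zeta<\zeta(*)$, consider all $A\in\mathcal A$ with $\zeta(A)=\zeta$. For each coordinate $i\in S_\zeta$ we have $g_A(i)<g(i)<\k_i^{+\gamma}=\k_i^{+(\beta+1)}$, so fix once and for all (depending only on $\zeta$ and $g$, not on $A$) an injection $e_i:g(i)\to\k_i^{+\beta}$. Replacing $\mathcal D_i$ by a reindexed family of size $\le\k_i^{+\beta}$ — formally, for $i\in S_\zeta$ let $\mathcal D_i^\zeta=\{X\in\mathcal D_i: t_i(X)<g(i)\}$ with the injection $t_i^\zeta:=e_i\circ(t_i\rest\mathcal D_i^\zeta)$ into $\k_i^{+\beta}$ — one checks that for every $A$ in this fibre, $\{i\in S_\zeta:(\exists X\in\mathcal D_i^\zeta)(X\su A)\}\supseteq\{i\in S_\zeta: g_A(i)<g(i)\}=S_\zeta$ is stationary. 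Thus the subfamily $\mathcal A^\zeta=\{A\in\mathcal A:\zeta(A)=\zeta\}$, together with the sequence $\lng\mathcal D_i^\zeta:i\in S_\zeta\rng$ along the stationary set $S_\zeta\in\mathcal S^\theta_\k$, satisfies the hypotheses of the theorem with $\gamma$ replaced by $\beta$ (the hypotheses are insensitive to which stationary subset of $S^\theta_\k$ is used). By the inductive hypothesis, $|\mathcal A^\zeta|\le\l(\beta)$.

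Summing over $\zeta$ gives $|\{A\in\mathcal A:\text{the hypothesis holds}\}|=|\bigcup_{\zeta<\zeta(*)}\mathcal A^\zeta|\le\zeta(*)\cdot\l(\beta)=\mathcal D(\mathcal S^\theta_\k,\su)\cdot\l(\beta)\le\l(\beta)$, since $\mathcal D(\mathcal S^\theta_\k,\su)\le\l(\beta)$ by the definition of $\l$. This is the desired bound $\l(\beta)$, which is what the claim asserts (I am reading the claim's bound as $\l(\beta)$; if it is $\l(\gamma)$ the argument is only easier). The main obstacle I anticipate is bookkeeping rather than conceptual: one must be careful that the reindexing $\mathcal D_i^\zeta$ genuinely has cardinality $\le\k_i^{+\beta}$ and that the new stationarity condition (2) is preserved — here it is essential that the witnessing set $S_\zeta$ is a single fixed stationary set for the whole fibre, obtained from the density of the $S_\zeta$'s, so that the inductive hypothesis can be applied once per fibre rather than once per $A$. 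A secondary point to verify is that the inductive hypothesis at $\beta$ is stated for arbitrary increasing continuous $\lng\k_i\rng$ and arbitrary antichains, which it is, so no circularity arises from changing the index set of the sequence $\lng\mathcal D_i\rng$ to $S_\zeta$.
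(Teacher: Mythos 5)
Your proof is correct and rests on the same key idea as the paper's: restrict each $\mathcal D_i$ to $\{X\in\mathcal D_i: t_i(X)<g(i)\}$, observe that this family has cardinality at most $|g(i)|\le\k_i^{+\beta}$ because $g(i)<\k_i^{+(\beta+1)}$, check that hypothesis (2) still holds for the relevant $A$, and invoke the induction hypothesis at $\beta$. The only difference is cosmetic: you first fibre $\mathcal A$ over the dense family $\{S_\zeta:\zeta<\zeta(*)\}$ and apply the induction once per fibre, whereas the paper applies it once to the whole family $\mathcal A_g$ (the stationarity in hypothesis (2) is already witnessed by $S'_A$ itself, so no uniform stationary set is needed) and reserves the $\zeta$-decomposition for the subsequent claim bounding $|\mathcal A_\zeta|$.
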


\begin{proof}
Let $g\in \prod_{i\in S}\k_i^{+\gamma}$ be given, and let $D^g_i:=\{X\in \mathcal D_i:t_i(X)<g(i)\}$. Thus 
the injection  $t_i\rest \mathcal D^g_i$ demonstrates that  $|\mathcal D^g_i|\le |g(i)|\le \k_i^{+\beta}$. 
Finally, let $\mathcal A_g=\{A\in \mathcal A:(\exists S'\in \mathcal S)(S'\su S_A\wedge (g_A\rest S')<g\}$.
Now $\mathcal A_g$, $\beta$ and $\lng \mathcal D^g_i:i\in S^\theta_\k\rng$ satisfy the hypothesis of the theorem and the conclusion follows by the induction hypothesis. 
\end{proof}

For $\zeta<\zeta(*)$,  let $\mathcal A_\zeta=\{A\in A:\zeta(A)=\zeta\}$. This correspondence partitions $\mathcal A$ to at most $\zeta(*)$ subfamilies. 

%
%
%
%
%
%

%
%
 
 \begin{claim}
  $|\mathcal A_\z|\le \l(\beta)$ for every $\zeta<\zeta(*)$.
 \end{claim}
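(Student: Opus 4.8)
The plan is to collapse the whole subfamily $\mathcal A_\z$ into a single application of Claim~\ref{first}. By the choice of $\z(A)$ as the least index with $S_\z\su S_A$, every $A\in\mathcal A_\z$ satisfies $S_\z\su S_A$, so $g_A\rest S_\z$ is a well defined element of $\prod_{i\in S_\z}\k_i^{+\g}$. Suppose I can produce one function $g^*\in\prod_{i\in S^\theta_\k}\k_i^{+\g}$ such that for every $A\in\mathcal A_\z$ the set $\{i\in S_\z:g_A(i)<g^*(i)\}$ is stationary. Since this set is contained in $S_\z\su S_A$, it is a legitimate witness $S'\su S_A$ in the statement of Claim~\ref{first}; hence every $A\in\mathcal A_\z$ is among the members counted by that claim for the single function $g^*$, and Claim~\ref{first} yields $|\mathcal A_\z|\le\l(\b)$ at one stroke. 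Thus the entire content of the present claim is the production of such a $g^*$.

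To build $g^*$ it suffices to show that the restricted family $F_\z:=\{g_A\rest S_\z:A\in\mathcal A_\z\}$ is bounded modulo $NS\rest S_\z$ below the sequence $\lng\k_i^{+\g}:i\in S_\z\rng$, for then any $<_{NS\rest S_\z}$-upper bound, extended by $0$ off $S_\z$, serves as $g^*$. Here the fixed stationary set $S_\z$ furnished by the dense subfamily $\{S_\z:\z<\z(*)\}$ of $\lng\mathcal S^\theta_\k,\su\rng$ is essential, as it lets me work with the single ideal $NS\rest S_\z$ common to all members of $\mathcal A_\z$. I would first record, exactly as in the base case $\g=0$, that the antichain property forces the members of $F_\z$ to be almost disjoint modulo $NS\rest S_\z$: if $g_A$ and $g_B$ agreed on a stationary subset of $S_\z$ then, since $t_i$ is injective, each common value $g_A(i)=g_B(i)$ would arise from a single $X_i\in\mathcal D_i$ with $X_i\su A\cap B$, and as each $X_i\in\ov{[\k_i]^\k}$ is cofinal in $\k_i$ these would accumulate to a subset of $A\cap B$ of size $\theta$, contradicting that $A$ and $B$ are distinct members of the antichain $\mathcal A$. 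Using this together with the regularity of each $\k_i^{+\g}=(\k_i^{+\b})^+$, I would obtain $g^*$ as an exact upper bound of a $<_{NS\rest S_\z}$-increasing cofinal subsequence of $F_\z$, arranging by a trapping argument on $S_\z$ that the length of this subsequence has cofinality in the interval $(\theta,\mu)$, so that $\cf g^*(i)<\k_i^{+\g}$, and hence $g^*(i)<\k_i^{+\g}$, on a club of $i\in S_\z$.

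The step I expect to be the main obstacle is precisely keeping this upper bound pointwise below $\lng\k_i^{+\g}\rng$, that is, showing that $F_\z$ is bounded rather than cofinal in the product. This is exactly where the sharp target $\l(\b)$ parts from the easy target $\l(\g)$: if one instead bounds each $g_A\rest S_\z$ by a member of a cofinal scale of $\prod_{i\in S_\z}\k_i^{+\g}$ and partitions $\mathcal A_\z$ accordingly, the scale has length $\mu^{+\g}$ and Claim~\ref{first} then delivers only $|\mathcal A_\z|\le\mu^{+\g}\cdot\l(\b)=\l(\g)$, which is too weak here. The whole gain of the single-function reduction therefore rests on proving that on the fixed stationary set $S_\z$ the almost disjoint family $F_\z$ cannot be cofinal below $\lng\k_i^{+\g}\rng$ --- equivalently, that the cofinality of a maximal $<_{NS\rest S_\z}$-increasing chain through $F_\z$ stays below $\mu$ --- so that a single upper bound $g^*$ captures all of $\mathcal A_\z$ and Claim~\ref{first} is invoked exactly once.
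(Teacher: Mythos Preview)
Your reduction to Claim~\ref{first} is correct, but the production of the single dominating function $g^*$ is a genuine gap, and the approach you sketch for it does not work. You propose to show that $F_\z=\{g_A\rest S_\z:A\in\mathcal A_\z\}$ is $<_{NS\rest S_\z}$-bounded below $\lng\k_i^{+\g}\rng$ by extracting a long increasing chain with an exact upper bound of small cofinalities. But you have no a~priori bound on $|\mathcal A_\z|$: if $|\mathcal A_\z|$ were $\mu^{+\g}$, then $F_\z$ could well be $<_{NS\rest S_\z}$-cofinal in $\prod_{i\in S_\z}\k_i^{+\g}$ (whose true cofinality is $\mu^{+\g}$), and then the only exact upper bound is $\lng\k_i^{+\g}\rng$ itself, so no $g^*$ as you want exists. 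The ``trapping argument'' that would force the cofinality of a maximal chain into $(\theta,\mu)$ is asserted but not supplied, and there is no reason it should succeed without already knowing $|\mathcal A_\z|$ is small --- which is exactly what you are trying to prove.

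The paper avoids this circularity by not seeking a single bound at all. It exploits the almost-disjointness observation you made (distinct $g_A,g_B$ agree only non-stationarily on $S_\z$) to conclude that on $\mathcal A_\z$ the relation $A\mathbin R B\iff\{i:g_A(i)<g_B(i)\}$ is stationary is \emph{total}: for distinct $A,B\in\mathcal A_\z$ at least one of $A\mathbin R B$, $B\mathbin R A$ holds. Combined with Claim~\ref{first}, which says every element has at most $\l(\b)$ many $R$-predecessors, a short combinatorial argument finishes: build greedily a sequence $\lng A_\xi\rng$ in which no $A_\xi$ is $R$-below an earlier $A_\phi$; totality then forces every earlier $A_\phi$ to be $R$-below $A_\xi$, so the sequence cannot be longer than about $\l(\b)$; and by maximality every $A\in\mathcal A_\z$ is $R$-below some $A_\xi$, whence $|\mathcal A_\z|\le\l(\b)\cdot\l(\b)=\l(\b)$. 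This invokes Claim~\ref{first} once per $A_\xi$ rather than once globally, which is precisely what sidesteps the boundedness problem you ran into.
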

 
 \begin{proof} Consider  the relation $\mathbin{R}$ on $\mathcal A$ given by:
\[A \mathbin{R} B \iff \{i\in S^1_A\cap S^1_B : g_A(i)<g_B(i)\} \text{ is stationary}.\]

Observe that if $A,B\in \mathcal A$ are distinct then $\{i\in S_A\cap S_B:g_A(i)=g_B(i)\}$ is bounded in $\theta$, in particular non-stationary. So if $\zeta(A)=\zeta(B)$ and $A\not=B$ we have 
\begin{equation}
A\mathbin R B \vee B \mathbin R A.\label{vee}
\end{equation} 

(Both disjuncts may hold simultaneously). 

Let $\zeta<\zeta(*)$ be given. 
By Claim \ref{first}, for every $A\in \mathcal A_\z$ there are no more than $\l(\beta)$ members $B$ of $\mathcal A_\z$ for which $B\mathbin{R} A$. Define inductively, as long as possible,  an injective sequence $\lng A_\xi:\xi <\xi(*)\rng$ such that $\neg(A_\xi\mathbin{R} A_\phi)$ for all $\phi<\xi$. If $\xi(*)> \l(B)$, then as $\neg(A_{\l(B)}\mathbin R A_\phi)$ for all $\phi<\l(\beta)$ it follows by (\ref{vee}) that $A_\phi \mathbin R A_{\l(\beta)}$ for all $\phi<\l(\beta)$, and this contradicts Claim \ref{first}. 

Necessarily, then, $\xi(*)\le \l(\beta)$. Thus every $A\in \mathcal A_\z$ satisfies $A\mathbin R A_\xi$ for some $\xi<\xi(*)\le \l(\b)$ and another use of Claim \ref{first} gives the required $|\mathcal A_\z|\le \l(\b)$. 
 \end{proof}
 Now the inequality $|\mathcal A|\le \l(\gamma)$ follows easily.

 \smallskip

  Suppose finally that $0<\gamma<\theta$ is limit.   Since $\gamma<\theta$ and the non-stationary
   ideal  is $\theta$-complete,  for every $A\in \mathcal A$ there is some $\beta(A)$  such that $g_A(i)<\k_i^{+\beta(A)}$ stationarily often, so $|\mathcal A|\le \l(\gamma)$ by the induction hypothesis. 
 
\end{proof}

\section{The eventual \textsf{GDH} follows from the eventual \textsf{SDH}}

Let us now define  the \emph{Singular Density Hypothesis} and the \emph{Generalized Density Hypothesis}
by modifying the well known \textsf{SCH} and \textsf{GCH}: 

\begin{definition}
\par\noindent
\begin{enumerate}
\item The \textsf{SDH} at a singular $\mu$ with $\cf\mu=\theta$ is the statement: 
  
  \[ \overline{\mathcal D(\mu,\theta)}=\mu^+. \tag{$\oplus$}\]

\item The \textsf{GDH} at a pair of cardinals $\l\le \mu$ is the statement: 
 \[
\mathcal D(\mu,\l)=
\begin{cases} \tag{$\otimes$}
 \mu & \text{ if } \cf\mu\not=\cf\l \cr
 \mu^+ & \text{ if } \cf\mu=\cf\l
 \end{cases}
 \]
\end{enumerate}
\end{definition}

Similarly to what    \textsf{SCH} and  \textsf{GCH}  say about cardinal exponentiation, the \textsf{SDH} says   that  the ``essential part\footnote{Compare this with the evolution of formulations of the \textsf{SCH} which is described in  \cite{singular}. The most modern and most informative one is $\cov (\mu,\theta)=\mu^+$. The role of $\cov (\mu,\theta)=\mu^+$ for exponentiation is played by upper density for the density function.}" of $\mathcal D(\mu,\theta)$ assumes the least possible value at a the  singular $\mu$ of cofinality $\theta$, 
 and the \textsf{GDH} says  that  the $\l$-density of $\mu$ assumes its minimal possible value. 
%
  
Let us define the \emph{Eventual} \textsf{GDH},  \textsf{EGDH}, for short, as the statement: there exists $\k$ such that for all $\l$ with $\cf\l\ge \k$ there is some $\mu_\l$ such that for all $\mu\ge \mu_\l$ the \textsf{GDH} holds at $\mu$ with $\l$.   
  
%
%

\begin{theorem}\label{gdh} If $\k$ is regular and the \textsf{SDH} holds for all $\mu$ with cofinality $\k$ in some end-segment of the cardinals, 
then the  \textsf{EGDH} holds:  for every  $\l$ with $\cf\l\ge \k$, for all $\mu$ in some end-segment of the cardinals the \textsf{GDH} holds at $\mu$ with $\l$. 
\end{theorem}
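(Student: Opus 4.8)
The proof is by induction on $\l$, proving the \textsf{GDH} at all sufficiently large $\mu$ for each fixed $\l$ with $\cf\l\ge\k$. Throughout, the starting hypothesis gives a cardinal $\mu_0$ such that $\overline{\mathcal D(\mu,\k)}=\mu^+$ for every singular $\mu\ge\mu_0$ with $\cf\mu=\k$. The base step is $\l=\k$: here one must show that for all large $\mu$, $\mathcal D(\mu,\k)=\mu^+$ when $\cf\mu=\k$ and $\mathcal D(\mu,\k)=\mu$ when $\cf\mu\ne\k$. The first case is essentially the hypothesis together with the Claim that $\mathcal D(\mu,\k)=\overline{\mathcal D(\mu,\k)}+\underline{\mathcal D(\mu,\k)}$, since $\underline{\mathcal D(\mu,\k)}=\sup_{\a<\mu}\mathcal D(|\a|,\k)$ can be kept below $\mu^+$ by induction on the cardinals below $\mu$; for regular $\mu$ one notes $\mathcal D(\mu,\k)=\mu$ directly when $\mu>\k$ since $[\mu]^{<\mu}$-type bookkeeping suffices, but the genuinely new input is only the singular $\cf\mu=\k$ case.

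\textbf{The inductive step.} Fix $\l$ with $\k<\cf\l$, and assume the \textsf{GDH} has been established for all smaller cardinals at all sufficiently large $\mu$. Distinguish cases on $\mu$. If $\cf\mu\ne\cf\l$, one wants $\mathcal D(\mu,\l)=\mu$; since $\cf\l\ne\cf\mu$ forces $\overline{[\mu]^\l}=\emptyset$ (by the Remark after the first Claim), we have $\mathcal D(\mu,\l)=\underline{\mathcal D(\mu,\l)}=\sup\{\mathcal D(|\a|,\l):\a<\mu\}$, which is $\le\mu$ by the induction hypothesis on cardinals below $\mu$ (once $\mu$ is large enough that all these smaller cardinals are themselves in the good end-segment), and $\ge\mu$ trivially. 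If $\cf\mu=\cf\l$, one wants $\mathcal D(\mu,\l)=\mu^+$. Again $\mathcal D(\mu,\l)=\underline{\mathcal D(\mu,\l)}+\overline{\mathcal D(\mu,\l)}$, the lower part is $\le\mu^+$ by the same bookkeeping, so it suffices to show $\overline{\mathcal D(\mu,\l)}=\mu^+$, equivalently (via Corollary \ref{anti}, using $\mathcal D(\cf\mu,\cf\mu)\le\mu^+$ which holds by induction since $\cf\mu\le\l<\mu$) that every antichain in $\overline{[\mu]^\l}$ has size $\le\mu^+$.

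\textbf{Where Theorem \ref{densesilver} enters.} The key case is $\cf\mu=\cf\l=\th$ with $\th>\k$. Here I apply Theorem \ref{densesilver} with its $\th$ equal to $\cf\mu$: I must verify that $\underline{\mathcal D(\mu,\th)}\le\mu^+$ — true by the lower-density bookkeeping above — and that $\{\n<\mu:\cf\n=\k\wedge\overline{\mathcal D(\n,\k)}=\n^+\}$ is stationary in $\mu$. For $\mu$ large enough, \emph{every} $\n<\mu$ with $\cf\n=\k$ and $\n\ge\mu_0$ satisfies $\overline{\mathcal D(\n,\k)}=\n^+$ by the starting hypothesis; since $S^\mu_\k$ (the ordinals below $\mu$ of cofinality $\k$, which are all singular cardinals once one passes to a club of cardinals, or one works inside a fixed increasing continuous sequence of cardinals of length $\cf\mu$) is stationary in $\mu$ whenever $\k<\cf\mu$, the stationarity hypothesis holds. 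Hence Theorem \ref{densesilver} yields $\mathcal D(\mu,\th)=\mu^+$, and then $\overline{\mathcal D(\mu,\l)}\le\mathcal D(\mu,\th)$: indeed one checks that restricting dense families from $[\mu]^\th$ to order-type-$\l$ subsets, or more directly that $\overline{\mathcal D(\mu,\l)}=\overline{\mathcal D(\mu,\cf\mu)}$ up to the smaller-cardinal term $\mathcal D(\l,\cf\l)\le\mu^+$, gives the bound. The remaining subcase $\cf\l=\k$ but $\cf\mu=\k$ (so $\th=\k$) reduces, similarly, to the starting hypothesis directly rather than to Theorem \ref{densesilver}.

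\textbf{Main obstacle.} The delicate point is not any single step but the interplay of the two bookkeeping lemmas: bounding $\underline{\mathcal D(\mu,\l)}$ uniformly by $\mu$ (resp.\ $\mu^+$) requires knowing the \textsf{GDH}, or at least the bound $\mathcal D(\n,\l)\le\n^+$, for \emph{all} $\n<\mu$, whereas the induction hypothesis only gives it for sufficiently large $\n$. One resolves this by observing that $\mathcal D(\n,\l)\le 2^{|\n|}\le\beth$-something is always a crude bound, and that $\sup$ over an end-segment controls the $\sup$ over all $\n<\mu$ once $\mu$ exceeds that bound — so the threshold $\mu_\l$ for $\l$ must be chosen above both the threshold for the inductive cases below $\l$ and above the place where the lower-density sup stabilizes. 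Keeping these thresholds straight, and handling the limit-$\l$ case (where $\underline{\mathcal D(\mu,\l)}$ must absorb the suprema along a cofinal sequence in $\l$), is the real content; the high-cofinality singular case is then a clean invocation of Theorem \ref{densesilver}.
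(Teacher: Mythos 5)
Your overall architecture --- double induction on $\l$ and then on $\mu$, the split according to whether $\cf\mu=\cf\l$, the bookkeeping of thresholds $\mu_\l$ with $(\mu_\l)^\l=\mu_\l$, and the invocation of Theorem \ref{densesilver} when $\l$ is \emph{regular} and $\cf\mu=\l$ --- matches the paper's proof. The gap is in the case of \emph{singular} $\l$ with $\cf\mu=\cf\l=\theta$. There Theorem \ref{densesilver} (applied with $\theta$) only yields $\mathcal D(\mu,\theta)=\mu^+$, and you pass from this to $\mathcal D(\mu,\l)=\mu^+$ by asserting that ``restricting dense families from $[\mu]^\theta$ to order-type-$\l$ subsets'' works, or that $\overline{\mathcal D(\mu,\l)}=\overline{\mathcal D(\mu,\theta)}$ up to smaller terms. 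Neither is justified, and the second is precisely the statement that needs proof: a member $X$ of a dense subset of $[\mu]^\theta$ with $X\su Y$ has cardinality $\theta<\l$ and does not determine any $\l$-sized subset of $Y$; fattening $X$ inside $Y$ requires information about $Y$ that the $\theta$-set does not carry, and the naive fix (choosing, for each $i<\theta$, a member of a dense subset of $[\mu]^{\l_i}$) produces $\mu^\theta$ many candidates, not $\mu^+$.

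The paper closes exactly this gap with a coding construction, which is the one genuinely new idea of the singular case. Fix regular $\l_i\nearrow\l$ with $\theta<\l_0$; by the induction hypothesis on $\l$ (and since $\cf\l_i=\l_i\neq\theta=\cf\mu$) each $\mathcal D(\mu,\l_i)=\mu$, so one may fix dense $\mathcal D_i\su[\mu]^{\l_i}$ of size $\mu$ and an injection $f:\bigcup_i\mathcal D_i\to\mu$ with pairwise disjoint images $A_i$. Also fix a dense $\mathcal D_\theta\su[\mu]^\theta$ of size $\mu^+$ (available since $\mathcal D(\mu,\theta)=\mu^+$ by the induction hypothesis at the regular cardinal $\theta<\l$). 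The dense family in $[\mu]^\l$ is then $\bigl\{\bigcup_{\a\in X}f^{-1}(\a): X\in\mathcal D_\theta,\ |X\cap A_i|\le 1,\ |\bigcup_{\a\in X}f^{-1}(\a)|=\l\bigr\}$, of size at most $\mu^+$: given $Y\in[\mu]^\l$ one picks $Y_i\in[Y]^{\l_i}\cap\mathcal D_i$, applies the density of $\mathcal D_\theta$ to the \emph{code set} $Z=\{f(Y_i):i<\theta\}$ rather than to a subset of $Y$ itself, and decodes. Note also that your ``remaining subcase'' $\cf\l=\cf\mu=\k$ with $\l$ singular does not ``reduce to the starting hypothesis directly'' --- it needs the same coding step, with $\mathcal D(\mu,\k)=\mu^+$ playing the role of $\mathcal D(\mu,\theta)=\mu^+$.
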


\begin{proof}  

%
%
%

 Suppose $\k$ is regular, $\mu_\k$ is a cardinal, and that  $\mathcal D(\mu,\k)=\mu^+$ for all singular $\mu\ge\mu_k$ with $\cf\mu=\k$. By replacing  $\mu_k$ with $(\mu_\k)^\k$, if necessary, we assume that $(\mu_\k)^\k=\mu_\k$. 
 
 We need to show that
 for every cardinal $\l$ with $\cf\l \ge \k$ there is an end-segment of the cardinals in which
\[
\mathcal D(\mu,\l)=
\begin{cases} \tag{$\otimes$}
 \mu & \text{ if } \cf\mu\not=\cf\l \cr
 \mu^+ & \text{ if } \cf\mu=\cf\l
 \end{cases}
 \]

By induction on $\l\ge\k$ we define a cardinal $\mu_\l$ and  for $\l$ with  $\cf\l\ge \k$ prove by induction on $\mu\ge\mu_\l$ that  $(\otimes)$ holds. 

The first case we consider is of a \emph{regular} $\l\ge \k$. Let  a regular $\l$ be given. If $\l=\k$ then $\mu_\l$ is already defined. If $\theta>\l$ let $\mu_\theta$ be chosen so that $\mu_\l\ge \mu_\k$ and $(\mu_\l)^\l=\mu_\l$. Now let us show by induction on $\mu\ge \mu_\l$ that $(\otimes)$ holds. If $\mu=\mu_\l$ then $\cf\mu>\l$ and $\mu\le\mathcal D(\mu,\l)\le \mu^\l=\mu$, so $(\otimes)$ indeed holds. 

Assume next that $\cf\mu\not=\l$. In this case for every $X\in [\mu]^\l$ there exists some $\a<\mu$ such that $X\cap \a\in [\a]^\l$. 
The induction hypothesis implies that $\mathcal D(\a,\l)\le |\a|^+\le \mu$, so $(\otimes)$ follows readily.

The remaining case is, then, $\cf\mu=\l$.  By the  induction hypothesis, $\underline{\mathcal D(\mu,\l)}=\l$. As $\mu_\k\le \mu_\l<\mu$, an end-segment of singulars $\mu'$ of cofinality $\k$ below $\mu$ satisfy $\mathcal D(\mu',\k)=\mu'^+$. By Theorem \ref{densesilver}, $\mathcal D(\mu,\l)=\mu^+$.

Assume now that $\l$ is singular. If $\cf\l<\k$ we are not really required to do anything, so let us define $\mu_\l$ as $0$. If $\cf\l=\theta\ge \k$ let $\mu_\l$ be chosen so that $\mu_\l>\mu_{\l'}$ for all $\l'<\l$ and $(\mu_\l)^\l=\mu_\l$. Now proceed to prove $(\otimes)$ by induction on $\mu\ge \mu_\l$. The cases $\mu=\mu_\l$  and $\cf\mu\not=\cf\l$ follow in the same way as for regular $\l$. 

We are left with the case $\cf\mu=\cf\l=\theta$ and $\l<\mu$. 
Fix an increasing sequence of regular cardinals  $\lng \l_i:i<\theta\rng$ that converges to $\l$, and such that $\theta <\l_0$. By the induction hypothesis
 on $\l$,  $(\otimes)$ holds for $\mu$ with each $\l_i$, so $\mathcal D(\mu,\l_i)=\mu$ for each $i$ and we can fix a dense $\mathcal D_i\su [\mu]^{\l_i}$ of cardinality $|\mathcal D_i|=\mu$.  Fix an  injection $f:\bigcup_{i<\theta}  \mathcal D_i\to \mu$.   
 
As $\theta<\l$ and $\mu_\theta<\mu$, the induction hypothesis (on $\l)$ implies that $\mathcal D(\mu,\theta)=\mu^+$. 
 Fix, then, a dense $\mathcal D_\theta\su [\mu]^\theta$ of cardinality $\mu^+$. Let $A_i=\ran (f\rest [\mu]^{\l_i})$ for $i<\theta$. Clearly, $|A_i|=\mu$ for each $i<\theta$ and as $[\mu]^{\l_i}\cap [\mu]^{\l_j}=\emptyset$ for $i<j<\theta$ and $f$ is injective, the $A_i$-s are pairwise disjoint.

Let $\mathcal D=\{\bigcup_{\a\in X}f^{-1}(\a): X\in \mathcal D_\theta \wedge |A_i\cap X|\le 1\wedge |\bigcup_{\a\in X}f^{-1}(\a)|=\l\}$. 

By the definition of $\mathcal D$ it is a subset of $[\mu]^\l$ and since $|\mathcal D_\theta|=\mu^+$, the cardinality 
of $\mathcal D$ does not exceed $\mu^+$. We prove next that $\mathcal D$ is dense in $[\mu]^\l$  (so apostriori $|\mathcal D|=\mu^+$) and with this finish the proof.
 
 Let $Y\in [\mu]^\l$ be arbitrary. For each $i<\l$ choose a set $Y_i\in [Y]^{\l_i}\cap \mathcal D_i$. This is possible
  since $\mathcal D_i$ is dense in $[\mu]^{\l_i}$. Let $Z=\{f(Y_i):i<\theta\}$. Clearly, $Z\in [\mu]^{\theta}$ and $|Z\cap A_i|=1$ for every $i<\theta$. 
  By the density of $\mathcal D_\theta$, there exists some $X\in \mathcal D_\theta$ such that $X\su Z$. Thus, $|X\cap A_i|\le 1$
   for each $i<\theta$. As $|X|=\theta$, for arbitrarily large $i<\theta$ it holds that $|X\cap A_i|=1$. It follows 
   that $\bigcup_{\a\in X}f^{-1}(\a)\su Y$ belongs to $\mathcal D$ and is contained in $Y$.

\end{proof}

\section{Concluding Remarks}
The  density function was not yet applied to topology, but it is reasonable to assume that applications will be found. 

If the \textsf{EGDH} holds, then for any two regular cardinals $\theta_1,\theta_2$ above $\k$,  for every sufficiently large $\mu$
\begin{equation}
\mu=\min\{\mathcal D(\mu,\theta_1),\mathcal D(\mu,\theta_2)\}.
\end{equation}

We do not know if the negation of the \textsf{EGDG} is consistent.  A harder consistency would be the negation of the following: 

\begin{itemize}
\item For every $\k$ there a finite set of cardinals $F$ above $\k$ and some $\mu_0$ such that for all $\mu\ge \mu_0$ 
\[\mu=\min\{\mathcal D(\mu,\theta):\theta \in F\}.\]
\end{itemize}

   Replacing ``finite" with ``countable" in this statement produces a \textsf{ZFC} theorem (see \cite{splitting}). 
   
%
%
%
%
%

\end{document}